\documentclass{article}
\usepackage[T1]{fontenc}
\usepackage{amsmath,amssymb,amsthm,calc,graphicx,pdfpages,bm,tikz,mathpazo,txfonts}
\usepackage[left=2.5cm, top=2.5cm,bottom=2.5cm,right=2.5cm]{geometry}



\newtheorem {theorem}{Theorem} 
\newtheorem {proposition}[theorem]{Proposition}
\newtheorem {lemma}[theorem]{Lemma}

{\theoremstyle{definition}

}
{\theoremstyle{theorem}
\newtheorem {remark}[theorem]{Remark}

}

\newcommand{\Var}{\operatorname{Var}}

\def\ba{\begin{array}}
\def\ea{\end{array}}
\def\bea{\begin{eqnarray} \label}
\def\eea{\end{eqnarray}}
\def\be{\begin{equation} \label}
\def\ee{\end{equation}}
\def\bit{\begin{itemize}}
\def\eit{\end{itemize}}
\def\ben{\begin{enumerate}}
\def\een{\end{enumerate}}

\def\BB{\varmathbb{B}}

\def\EE{\varmathbb{E}}

\def\NN{\varmathbb{N}}
\def\PP{\varmathbb{P}}

\def\RR{\varmathbb{R}}


\def\g{\gamma}
\def\d{\delta}

\def\l{\lambda}

\def\G{\Gamma}

\def\L{\Lambda}



\def\cB{\mathcal{B}}

\def\cE{\mathcal{E}}
\def\cF{\mathcal{F}}

\def\dint{\textup{d}}




\setlength{\parindent}{0pt}

\begin{document}

\title{\bfseries Gaussian fluctuations for edge counts in\\ high-dimensional random geometric graphs}

\author{Jens Grygierek\footnotemark[1]\,\, and Christoph Th\"ale\footnotemark[2]}

\date{}
\renewcommand{\thefootnote}{\fnsymbol{footnote}}
\footnotetext[1]{Institute of Mathematics, Osnabr\"uck University, Germany. Email: jens.grygierek@uni-osnabrueck.de}

\footnotetext[2]{
Faculty of Mathematics, Ruhr University Bochum, Germany. Email: christoph.thaele@rub.de}

\maketitle

\begin{abstract}
\noindent Consider a stationary Poisson point process in $\RR^d$ and connect any two points whenever their distance is less than or equal to a prescribed distance parameter. This construction gives rise to the well known random geometric graph. The number of edges of this graph is counted that have midpoint in the $d$-dimensional unit ball. A quantitative central limit theorem for this counting statistic is derived, as the space dimension $d$ and the intensity of the Poisson point process tend to infinity simultaneously.
\bigskip
\\
{\bf Keywords}. {Central limit theorem, edge counting statistic, high dimensional random geometric graph, Poisson point process, second-order Poincar\'e inequality, stochastic geometry}\\
{\bf MSC (2010)}. 60D05, 60F05
\end{abstract}

\section{Introduction and main result}\label{sec:ïntro}

Fix an intensity $\l\in(0,\infty)$ and a distance parameter $\d\in(0,\infty)$, and let $\eta_\l$ be a stationary Poisson point process in $\RR^d$, $d\in\NN$, with intensity $\l$. The points of $\eta_\l$ are taken as the vertices of a random graph and we connect any two distinct vertices by an edge provided that their distance is less than or equal to $\d$. By this construction the random geometric graph in $\RR^d$ arises. Random graphs of this type have received considerable attention and by now belong to the heart of geometric probability and stochastic geometry. For an historical account and for more background material we refer the reader to the research monograph \cite{PenroseBook} of Penrose.

In this paper we are interested in the number of edges of the random geometric graph that have their midpoint in the $d$-dimensional unit ball $\BB^d$, that is, in the edge counting statistic
\begin{equation}
\cE(\l,\d,d):=\frac{1}{2}\sum_{\substack{x,y\in\eta_\l\\ x\neq y}}{\bf 1}\Big\{\|x-y\|\leq\d,\frac{x+y}{2}\in\BB^d\Big\}\,,
\end{equation}
where $\|x-y\|$ denotes the Euclidean distance of $x$ and $y$. We investigate the asymptotic distributional behaviour of $\cE(\l,\d,d)$, as $\delta\to 0$ and the intensity as well as the space dimension $d$ tend to infinity simultaneously. This set-up is opposed to most of the existing literature in which the focus lies on random geometric graphs in $\RR^d$ with some fixed space dimension $d$ (we refer to paper of Bubeck, Ding, Eldan and R\'acz \cite{BubeckDingEldanRacz} and that of Devroye, Gy\"orgy, Lugosi and Udina \cite{DevroyeGyorgyLugosiUdina} for notable exceptions, where, however, questions concerning the high-dimensional fluctuations are not touched). However, in view of the strong recent interest in the statistics of high-dimensional data sets and given the application of random geometric graphs to cluster analysis, we believe that it is worth investigating the probability theory behind high-dimensional random geometric graphs. In addition, it is the purpose of the present text to demonstrate that the Malliavin-Stein approach for Poisson functionals, which has found considerable attention in stochastic geometry over the last years (we refer to \cite{PeccatiReitzner} for a recent overview), can successfully be applied also to spatial random models in high dimensions.

To present our result, let us introduce the following more specialized set-up. At first, we choose a dimension-dependent distance parameter $\d=\d_d$, namely we take
\begin{equation}\label{eq:DefDeltad}
\d_d=\frac{1}{d}\,,
\end{equation}
which implies that $\d_d\in(0,1)$ for all $d \geq 2$ (the motivation for our choice is explained in Remark \ref{remark:choose_delta} below). We notice that $\d_d\to 0$, as $d\to\infty$. Next, we choose -- implicitly -- a dimension-dependent intensity  $\l=\l_d$ by requiring that
\begin{equation}\label{eq:DefLambdad}
\lim_{d\to\infty}\kappa_d^2\l_d^2\d_d^d = \infty\,.
\end{equation}
Here and below, $\kappa_d:=V_d(\BB^d)$ denotes the volume of the $d$-dimensional unit ball. Roughly speaking, the growth of the intensity parameter has to compensate the exponential decay of $\kappa_d$, which behaves like $\frac{1}{\sqrt{\pi d}}\Big(\frac{2\pi e}{d}\Big)^{d/2}$, as $d\to\infty$, according to Stirling's formula. This means that, as $d \rightarrow \infty$, the intensity $\lambda_d$ has to grow to infinity faster than $\sqrt{\pi d} \Big(\frac{d}{2 \pi e}\Big)^{d/2} d^d$.
To simplify our notation we shall use the abbreviation  $\cE_d$ for $\cE(\l_d,\d_d,d)$ with $\d_d$ and $\l_d$ given by \eqref{eq:DefDeltad} and \eqref{eq:DefLambdad}, respectively. 
We can now formulate the first results dealing with the first- and the second-order moment of the random variables $\cE_d$:
\begin{equation}\label{eq:Expectation}
\EE[\cE_d] = \frac{1}{2}\,\kappa_d^2\,\l_d^2\,\d_d^d
\end{equation}
and
\begin{equation}\label{eq:Variance}
\frac{1}{2} \kappa_d^2 \lambda_d^2 \delta_d^d + \Big( 1 - \tfrac{\delta_d}{2} \Big)^d \kappa_d^3 \lambda_d^3 \delta_d^{2d} \leq \Var[\cE_d] \leq \frac{1}{2} \kappa_d^2 \lambda_d^2 \delta_d^d + \Big( 1 + \tfrac{\delta_d}{2} \Big)^d \kappa_d^3 \lambda_d^3 \delta_d^{2d}
\end{equation}
see Lemma \ref{lem:VarianceBound} below. We remark that the proof of \eqref{eq:Expectation} and \eqref{eq:Variance} is based on a multiple use of Mecke's formula for Poisson point processes that we re-phrase below, see \eqref{eq:Mecke}.

We turn now to our main result, that is, the quantitative central limit problem for the edge counting statistics $\cE_d$, as the space dimension $d$ tends to infinity. The rate of convergence in this limit theorem will be measured by the so-called Wasserstein distance $d_W(\,\cdot\,,\,\cdot\,)$ (see Section \ref{sec:2ndOrderPoincare} below for a formal definition). Finally, we shall indicate convergence in distribution by writing $\overset{D}{\longrightarrow}$.

\begin{theorem}\label{thm:CLT}
Let $Z$ be a standard Gaussian random variable. Then one can find absolute constants $c_1,c_2\in(0,\infty)$ such that
$$
d_W\Bigg(\frac{\cE_d-\EE[\cE_d]}{\sqrt{\Var[\cE_d]}},Z\Bigg) \leq c_1\,(\kappa_d\l_d)^{-\frac{1}{2}}\,\max\Big\{1,(\kappa_d\,\l_d\,\d_d^d)^{-\frac{1}{2}}\Big\}\,,
$$
whenever $d\geq c_2$. In particular, one has that
$$
\frac{\cE_d-\EE[\cE_d]}{\sqrt{\Var[\cE_d]}} \overset{D}{\longrightarrow} Z\,,\qquad\text{as}\qquad d\to\infty\,.
$$
\end{theorem}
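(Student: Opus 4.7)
The plan is to apply the second-order Poincar\'e inequality for Poisson functionals of Last--Peccati--Schulte (made available, presumably, in Section~\ref{sec:2ndOrderPoincare}). This inequality estimates the Wasserstein distance between a normalised Poisson functional and a standard Gaussian by the sum of three quantities $\g_1,\g_2,\g_3$, each an integral of $L^p$-norms of the first and second add-one-cost operators $D_xF$ and $D^2_{x_1,x_2}F$, divided by suitable powers of the variance.

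First I would compute these operators explicitly for $F=\cE_d$. Setting $K(x,y):=\mathbf{1}\{\|x-y\|\le\d_d\}\mathbf{1}\{(x+y)/2\in\BB^d\}$, a direct calculation from the definition of $\cE_d$ yields
\[
D_x\cE_d=\sum_{y\in\eta_{\l_d}\setminus\{x\}}K(x,y),\qquad D^2_{x_1,x_2}\cE_d=K(x_1,x_2).
\]
The crucial simplification is that the second add-one-cost is a bounded $\{0,1\}$-indicator, so every power of it equals itself; the first add-one-cost is a Poisson sum with the same kernel.

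Next I would estimate the $\g_i$. A two-fold application of Mecke's formula \eqref{eq:Mecke} gives $\l_d^2\int K(x_1,x_2)\,\dint(x_1,x_2)=\kappa_d^2\l_d^2\d_d^d=2\,\EE[\cE_d]$, matching \eqref{eq:Expectation}. For the $p$-th moments of $D_x\cE_d$, iterated use of Mecke reduces them to finite sums of $k$-fold integrals of products $\prod_{j=1}^{k}K(x,y_j)$, $1\le k\le p$, against $\l_d^k\,\dint y_1\cdots\dint y_k$; each such integral is bounded by $(\kappa_d\l_d\d_d^d)^k$ since the ball constraint already contributes the full factor $\kappa_d\d_d^d$ per $y_j$ and the midpoint constraint only tightens this. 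Substituting the resulting polynomial expressions into $\g_1,\g_2,\g_3$ and dividing by the lower bound $\tfrac12\kappa_d^2\l_d^2\d_d^d\le\Var[\cE_d]$ from \eqref{eq:Variance} should collapse to the claimed rate $c_1(\kappa_d\l_d)^{-1/2}\max\{1,(\kappa_d\l_d\d_d^d)^{-1/2}\}$. The convergence in distribution then follows immediately from assumption \eqref{eq:DefLambdad}.

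The main obstacle I anticipate is the dimensional bookkeeping of these high-dimensional geometric integrals. Bounding moments of $D_x\cE_d$ requires estimating volumes of intersections of several $\d_d$-balls sharing the common centre $x$ and subject to midpoint-in-$\BB^d$ constraints; dimensional constants (powers of $\kappa_d$ and of $d^{d/2}$) must cancel exactly against those in $\Var[\cE_d]$, and losing even a single factor of $d^{1/2}$ would invalidate the stated rate. The two regimes encoded in the $\max$ reflect whether the Poisson term $\tfrac12\kappa_d^2\l_d^2\d_d^d$ or the covariance-type term $(1\pm\d_d/2)^d\kappa_d^3\l_d^3\d_d^{2d}$ dominates the variance bound \eqref{eq:Variance}; one has to argue that the estimate derived from $\g_1,\g_2,\g_3$ is uniform across both of these regimes, which is presumably why the conclusion is only asserted for $d\ge c_2$.
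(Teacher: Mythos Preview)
Your overall strategy matches the paper's exactly: second-order Poincar\'e inequality, explicit formulas $D_x\cE_d=\sum_{y\in\eta}K(x,y)$ and $D^2_{x_1,x_2}\cE_d=K(x_1,x_2)$, Mecke to reduce moments of $D_x\cE_d$ to polynomials in the one-point integrals $A_k(x)=\l_d\int K(x,y)^k\,\dint y$, and then geometric bounds on these. Two points, however, need correction.

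First, your plan to divide throughout by the single lower bound $\sigma^2\ge\tfrac12\kappa_d^2\l_d^2\d_d^d$ does \emph{not} collapse to the claimed rate. In the regime $\kappa_d\l_d\d_d^d\to\infty$ the dominant contribution to $\g_3$ is $\sigma^{-3}\int A_1(x)^3\,\L(\dint x)\asymp\sigma^{-3}\kappa_d\l_d(\kappa_d\l_d\d_d^d)^3$, and with only the Poisson-term lower bound this becomes of order $(\kappa_d\l_d)^{-1/2}(\kappa_d\l_d\d_d^d)^{3/2}\to\infty$; the same blow-up occurs in $\sqrt{\g_1}$. The paper avoids this by using the \emph{other} half of \eqref{eq:Variance}, namely $\sigma^2\ge(1-\d_d/2)^d\kappa_d^3\l_d^3\d_d^{2d}$, precisely in that regime. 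So the case split you anticipate is not merely about which term dominates the numerator: the variance lower bound itself must be switched. The paper in fact distinguishes three regimes according to whether $\kappa_d\l_d\d_d^d$ tends to $\infty$, to a positive constant, or to $0$, and in each regime picks the appropriate variance lower bound before estimating $\g_1,\g_2,\g_3$.

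Second, a smaller but essential point: bounding $A_k(x)\le\kappa_d\l_d\d_d^d$ uniformly in $x$ is correct, but then $\int_{\RR^d}A_k(x)^m\,\L(\dint x)$ would be infinite. The midpoint constraint does more than ``tighten'' the $y$-integral; it forces $A_k(x)=0$ unless $x\in\BB^d_{1+\d_d/2}(0)$, and it is this compact support that makes the outer $x$-integral equal to $(1+\d_d/2)^d\kappa_d\l_d$ times the pointwise bound. The choice $\d_d=1/d$ is what keeps $(1\pm\d_d/2)^d$ bounded above and below by absolute constants, and this is where the restriction $d\ge c_2$ enters.
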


The central limit theorem and especially the bound for Wasserstein distance in the previous theorem shares some similarities with the central limit theorem for the edge counting statistic in \cite{ReitznerSchulteThaele}. However, while in the latter result the space dimension is fixed, it increases in our set-up. In this context, all constants arising in \cite{ReitznerSchulteThaele} and which also arise in our approach need to be treated carefully in order to distinguish their dimension dependent behaviour from absolute constants. Moreover, while the proof of the central limit theorem in \cite{ReitznerSchulteThaele} is based on a general central limit theorem for Poisson $U$-statistics from \cite{ReitznerSchulte}, the proof we present is slightly different and uses the second-order Poincaré inequality for Poisson functionals from the recent paper \cite{LastPeccatiSchulte} of Last, Peccati and Schulte.


\medbreak

The rest of this text is structured as follows. In Section \ref{sec:Preliminaries} we recall some necessary background material and, in particular, re-phrase there the second-order Poincar\'e inequality for Poisson functionals. A bound for the normal approximation of second-order $U$-statistics will be derived in Section \ref{sec:2ndOrderUstatistics}, while the final Section \ref{sec:Proof} contains the proof of Theorem \ref{thm:CLT}.

\section{Preliminaries}\label{sec:Preliminaries}

\subsection{Notation}

The $d$-dimensional Euclidean space is denoted by $\RR^d$ and we let $\cB^d$ be the Borel $\sigma$-field on $\RR^d$. The Lebesgue measure on $\RR^d$ is indicated by $V_d$. A $d$-dimensional ball with radius $r > 0$ and centre in $z \in \RR^d$ is defined by
\begin{align*}
\BB^d_{r}(z) := \{ x \in \RR^d : \| x - z \| \leq r\}\,,
\end{align*}
where $\|\,\cdot\,\|$ stands for the usual Euclidean distance. We shall write $\BB^d$ instead of $\BB^d_1(0)$ and denote by $\kappa_d := V_d(\BB^d) =\pi^\frac{d}{2}/\Gamma[1+d/2]$ the volume of the $d$-dimensional unit ball $\BB^d$, where $\G[\,\cdot\,]$ is Euler's gamma function.

We use the symbol ${\sf N}$ to indicate the class of counting measures on $\RR^d$ and supply the space $\sf N$ with the smallest $\sigma$-field ${\cal N}_\sigma$ such that all mappings of the form $\mu\mapsto\mu(B)$ with $\mu\in{\sf N}$ and $B\in\cB^d$ are measurable. It will be convenient for us to identify a counting measure $\mu\in{\sf N}$ with its support and to write $x\in\mu$ if the point $x\in\RR^d$ is charged by $\mu$. The Dirac measure concentrated at a point $x\in\RR^d$ is denoted by $\d_x$.

\subsection{Poisson functionals, Mecke's formula and a second-order Poincar\'e inequality}\label{sec:2ndOrderPoincare}

We let $(\Omega,\cF,\PP)$ be our underlying probability space. A Poisson point process $\eta$ on $\RR^d$ with intensity measure $\L$ is a random counting measure on $\RR^d$, that is, a random element in $\sf N$, such that (i) $\eta(B)$ is Poisson distributed with mean $\L(B)$ for all $B\in\cB^d$ and (ii) $\eta(B_1),\ldots,\eta(B_m)$ are independent random variables whenever the sets $B_1,\ldots,B_m\in\cB^d$, $m\in\NN$, are pairwise disjoint. A Poisson point process is called stationary if its intensity measure $\L$ is a constant multiple $\l\geq 0$ of the Lebesgue measure on $\RR^d$. The constant $\l$ is called the intensity of the Poisson point process and we will always assume that $\l\in(0,\infty)$. It is well known that such a Poisson point process $\eta$ satisfies the following multivariate Mecke formula, see \cite[Theorem 4.4]{LastPenrose}. For integers $m\in\NN$ and non-negative measurable functions $g:(\RR^d)^m\times{\sf N}\to\RR$ it says that
\begin{equation}\label{eq:Mecke}
\begin{split}
&\EE\sum_{(x_1,\ldots,x_m)\in\eta^m_{\neq}}h(x_1,\ldots,x_m;\eta)\\
&\qquad\qquad= \l^m\int_{(\RR^d)^m}\EE[h(x_1,\ldots,x_m;\eta+\d_{x_1}+\ldots+\d_{x_m})]\,\dint(x_1,\ldots,x_m)\,,
\end{split}
\end{equation}
where $\eta^m_{\neq}$ is the collection of $m$-tuples of pairwise distinct points charged by $\eta$ and $\EE$ stands for the expectation (integration) with respect to our probability measure $\PP$.

By a Poisson functional $F$ we understand a random variable that is almost surely of the form $F=f(\eta)$, where $f:{\sf N}\to\RR$ is some measurable function, the so-called representative of $F$. For a Poisson functional $F$ with representative $f$ and $x\in\RR^d$ we define the difference operator $D_xF$ of $F$ as follows:
$$
D_xF:=f(\eta+\d_x)-f(\eta)\,.
$$
Furthermore, for two points $x_1,x_2\in\RR^d$ the second-order difference operator $D_{x_1,x_2}F$ applied to $F$ is given by
\begin{align*}
D_{x_1,x_2}F &:=D_{x_2}(D_{x_1}F)=D_{x_1}(D_{x_2}F)\\
&=f(\eta+\d_{x_1}+\d_{x_2})-f(\eta+\d_{x_1})-f(\eta+\d_{x_2})+f(\eta)\,.
\end{align*}

The first- and second-order difference operators can be used to reformulate a bound for the `distance' between a Poisson functional $F$ and a standard Gaussian random variable $Z$. To measure the closeness of $F$ and $Z$ we will use the Wasserstein distance $d_W(F,Z)$, which is given by
$$
d_W(F,Z):=\sup_{h\in{\rm Lip}(1)}\big|\EE[h(F)]-\EE[h(Z)]\big|\,.
$$
Here, the supremum runs over the class ${\rm Lip}(1)$ of Lipschitz functions $h:\RR\to\RR$ with Lipschitz constant less than or equal to $1$. In the definition we implicitly assume that $F$ and $Z$ are both defined on our probability space $(\Omega,\cF,\PP)$. We point out that convergence in Wasserstein distance of a sequence of random variables implies convergence in distribution.

We are now prepared to rephrase a version of the main result from \cite{LastPeccatiSchulte}, a so-called second-order Poincar\'e inequality for Poisson functionals, see also \cite[Theorem 21.3]{LastPenrose}. It is the main device in our proof of Theorem \ref{thm:CLT}.

\begin{proposition}\label{prop:SecondOrderPoincare}
Let $\eta$ be a Poisson point process on $\RR^d$ with intensity measure $\L$ and let $F$ be a Poisson functional satisfying $\EE[F]=0$, $\Var[F]=1$ and $\EE\int_{\RR^d}(D_xF)^2\,\L(\dint x)<\infty$. Further, let $Z$ be a standard Gaussian random variable. Defining
\begin{align*}
\g_1(F) &:=\int_{(\RR^d)^3}\big(\EE[(D_{x_1}F)^4]\,\EE[(D_{x_2}F)^4]\,\EE[(D_{x_1,x_3}F)^4]\,\EE[(D_{x_2,x_3}F)^4]\big)^{1/4}\,\L^3(\dint(x_1,x_2,x_3))\,,\\
\g_2(F) &:= \int_{(\RR^d)^3}\big(\EE[(D_{x_1,x_3}F)^4]\,\EE[(D_{x_2,x_3}F)^4]\big)^{1/2}\,\L^3(\dint(x_1,x_2,x_3))\,,\\
\g_3(F) &:= \int_{\RR^d}\EE[|D_xF|^3]\,\L(\dint x)\,,
\end{align*}
one has that
$$
d_W(F,Z) \leq 2\sqrt{\g_1(F)} + \sqrt{\g_2(F)}+\g_3(F)\,.
$$
\end{proposition}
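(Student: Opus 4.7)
\medskip

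My plan is to follow the standard Malliavin--Stein route on the Poisson space, applying a Poincar\'e-type inequality twice, which is precisely what gives the result its name.

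First, I would invoke Stein's method in the form that bounds $d_W(F,Z)$ by $\sup_g|\EE[Fg(F)-g'(F)]|$ with $g$ ranging over a class of Stein solutions (bounded, Lipschitz). Then I would apply the integration-by-parts formula on Poisson space, namely $\EE[Fg(F)] = \EE\langle Dg(F),-DL^{-1}F\rangle_{L^2(\Lambda)}$, where $L$ is the Ornstein--Uhlenbeck generator and $L^{-1}$ its pseudo-inverse. Using the add-one-cost expansion $g(F+D_xF)-g(F) = g'(F)D_xF + R(x)$ with a remainder controlled by $\|g''\|_\infty (D_xF)^2$, this reduces the problem to estimating two quantities:
\begin{equation*}
A := \EE\bigl|1-\langle DF,-DL^{-1}F\rangle_{L^2(\Lambda)}\bigr|
\qquad\text{and}\qquad
B := \int_{\RR^d}\EE\bigl[(D_xF)^2\,|D_xL^{-1}F|\bigr]\,\Lambda(\dint x).
\end{equation*}
The normalization $\Var F=1$ together with $\EE\langle DF,-DL^{-1}F\rangle_{L^2(\Lambda)} = \Var F$ turns $A$ into a standard deviation bound, $A\leq \sqrt{\Var\langle DF,-DL^{-1}F\rangle_{L^2(\Lambda)}}$.

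Second, for $B$ I would use the contractivity of the Ornstein--Uhlenbeck semigroup on Poisson chaos: Mehler's formula applied to $-DL^{-1}F=\int_0^\infty e^{-t}P_tDF\,\dint t$ yields $\EE[|D_xL^{-1}F|^p]\leq\EE[|D_xF|^p]$ for $p\geq 1$. Therefore $B\leq\int\EE[|D_xF|^3]\,\Lambda(\dint x)=\gamma_3(F)$ by H\"older.

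Third, and this is the combinatorial heart of the argument, I would control $\Var\langle DF,-DL^{-1}F\rangle_{L^2(\Lambda)}$ by applying the classical Poincar\'e inequality $\Var G\leq \int \EE[(D_xG)^2]\,\Lambda(\dint x)$ to the Poisson functional $G=\langle DF,-DL^{-1}F\rangle_{L^2(\Lambda)}$. Computing the add-one-cost of $G$ and using the product rule $D_x(UV) = (D_xU)V+U(D_xV)+(D_xU)(D_xV)$ introduces the second-order difference operators $D_{x_1,x_3}F$ and $D_{x_2,x_3}L^{-1}F$. Iterated Cauchy--Schwarz together with another application of Mehler's formula to pass from $D^2 L^{-1}F$-moments back to $D^2F$-moments will produce two contributions, one matching $\gamma_1(F)$ (with the four factors of fourth moments under the cube root integral) and one matching $\gamma_2(F)$ (the simpler product of two second-order $L^4$ norms). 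Tracking the constants gives exactly the coefficients $2\sqrt{\gamma_1(F)}+\sqrt{\gamma_2(F)}$.

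The main obstacle will be the bookkeeping in the third step: carefully expanding $D_x\langle DF,-DL^{-1}F\rangle_{L^2(\Lambda)}$ into first- and second-order pieces, pairing each factor with the correct H\"older exponent so that only fourth moments of $DF$ and of $D^2F$ survive (no mixed higher orders), and invoking Mehler's semigroup estimate the minimal number of times so that no additional constants or worse moments appear. Once this combinatorial expansion is organized cleanly, the rest is a chain of Cauchy--Schwarz applications culminating in the stated inequality.
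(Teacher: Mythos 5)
Your outline is correct, but note what it actually is: a re-derivation of the black box that the paper does not open. The paper's entire proof of this proposition is two steps: cite Theorem~1.1 of Last--Peccati--Schulte, which gives
\begin{align*}
d_W(F,Z)\;\leq\; 2\Big[\int_{(\RR^d)^3}\big(\EE[(D_{x_1}F)^2(D_{x_2}F)^2]\big)^{1/2}\big(\EE[(D_{x_1,x_3}F)^2(D_{x_2,x_3}F)^2]\big)^{1/2}\,\L^3(\dint(x_1,x_2,x_3))\Big]^{1/2}&\\
+\Big[\int_{(\RR^d)^3}\EE[(D_{x_1,x_3}F)^2(D_{x_2,x_3}F)^2]\,\L^3(\dint(x_1,x_2,x_3))\Big]^{1/2}+\g_3(F)\,,&
\end{align*}
and then apply the Cauchy--Schwarz inequality inside each integrand to decouple the mixed moments into products of fourth moments, e.g.\ $\EE[(D_{x_1}F)^2(D_{x_2}F)^2]\leq\big(\EE[(D_{x_1}F)^4]\big)^{1/2}\big(\EE[(D_{x_2}F)^4]\big)^{1/2}$, which yields exactly $2\sqrt{\g_1(F)}+\sqrt{\g_2(F)}+\g_3(F)$. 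Your three steps --- the Stein bound with integration by parts reducing matters to $A$ and $B$; the Mehler/contraction estimate $\EE[|D_xL^{-1}F|^p]\leq\EE[|D_xF|^p]$ giving $B\leq\g_3(F)$; and the Poincar\'e inequality applied to $G=\langle DF,-DL^{-1}F\rangle_{L^2(\L)}$ with the product rule generating the second-order differences --- are precisely the internal proof of the cited theorem, so carried out in full your argument would be self-contained where the paper's is not. What the citation buys is brevity and the outsourcing of the genuinely delicate points you would have to handle: domain and integrability justifications for $D$, $L^{-1}$ and the Poincar\'e inequality under the sole hypothesis $\EE\int_{\RR^d}(D_xF)^2\,\L(\dint x)<\infty$, the exact Stein constants ($\|g'\|_\infty\leq 1$, $\|g''\|_\infty\leq 2$) that produce the coefficients, and the a.e.\ identity $-D_xL^{-1}F=\int_0^\infty e^{-t}P_tD_xF\,\dint t$ together with its second-order analogue. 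Two small corrections: the exponent in $\g_1(F)$ is a fourth root, not a ``cube root''; and you should state the final decoupling Cauchy--Schwarz step explicitly, since in the paper's formulation of $\g_1$ and $\g_2$ it is the only step that is not already contained in the cited theorem.
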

\begin{proof}
This is a direct consequence of Theorem 1.1 in \cite{LastPeccatiSchulte} and the Cauchy-Schwarz inequality.
\end{proof}

We remark that a similar but more involved bound also exists for the so-called Kolmogorov distance $d_K(F,Z):=\sup_{t\in\RR}|\PP(F\leq t)-\PP(Z\leq t)|$. We have decided to restrict to the Wasserstein distance in order to keep the presentation transparent and to focus on the principal mathematical ideas.

%
%
%

\section{A general bound for second-order $U$-statistics}\label{sec:2ndOrderUstatistics}
The purpose of the present section is to provide a general bound for the normal approximation of second-order $U$-statistics in the sense of \cite{ReitznerSchulte} with non-negative kernels based on a Poisson point process $\eta$ in $\RR^d$ having intensity measure $\L$. Formally, we define
	\begin{align}\label{eq:generalF}
		F_d := \frac{1}{2} \sum\limits_{(y_1,y_2) \in {\eta}^2_{\neq}} h(y_1,y_2)
	\end{align}
and assume that $h:\RR^d\times\RR^d\to[0,\infty)$ is a symmetric measurable function, which we allow to depend on the space dimension $d$. Furthermore, we assume that $\EE[F_d^2]<\infty$. Then, by Mecke's formula \eqref{eq:Mecke}, we have that
\begin{equation}\label{eq:ExpectationUstatistic}
\EE[F_d] = \int_{\RR^d}\int_{\RR^d}h(x_1,x_2)\,\L(\dint x_1)\L(\dint x_2)
\end{equation}
and
\begin{equation}\label{eq:UStatisticVariance}
\begin{split}
\Var[F_d] &= \int_{\RR^d}\Big(\int_{\RR^d}h(x_1,x_2)\,\L(\dint x_2)\Big)^2\L(\dint x_1)\\
&\qquad\qquad\qquad+\frac{1}{2}\int_{\RR^d}\int_{\RR^d}h(x_1,x_2)^2\,\L(\dint x_2)\L(\dint x_1)\,,
\end{split}
\end{equation}
see also \cite{LastPenrose}. We further denote $\sigma^2 := \Var[F_d]$ and put
	\begin{align*}
		\widetilde{F_d} := \frac{F_d - \EE[F_d]}{\sigma}\,.
	\end{align*}		
Finally, for $k,\ell\in\NN$ we define the two parameter integrals
	\begin{align*}
		A_k(x) & := \int_{\RR^d} h^k(x,y) \, \L(\dint y)\,,\qquad x\in\RR^d\,,\\
		B_{k,\ell}(x_1,x_2) & := \int_{\RR^d} h^k(x_1,y) \, h^\ell(x_2,y) \, \L(\dint y)\,,\qquad x_1,x_2\in\RR^d\,.
	\end{align*}
We will see in Proposition \ref{prop:generalCLT} below that $\g_1(F_d),\g_2(F_d)$ and $\g_3(F_d)$ defined in Proposition \ref{prop:SecondOrderPoincare} can be expressed in terms of $A_k$ and $B_{k,\ell}$ for special choices of $k$ and $\ell$.

According to Lemma 3.3 in \cite{ReitznerSchulte} and the assumed symmetry of $h$ it is clear that
	\begin{align}\label{eq:DxFh}
		D_xF_d = \sum\limits_{y \in \eta_d} h(y,x)\qquad\text{and}\qquad D_{x_1,x_2}F_d = h(x_1,x_2)
	\end{align}
for all $x,x_1,x_2\in\RR^d$. Moreover, from the definition of the difference operator it follows that
	\begin{align}\label{eq:DxFhs}
		D_x \widetilde{F_d} = \frac{D_xF_d}{\sigma}\qquad\text{and}\qquad D_{x_1,x_2} \widetilde{F_d} = \frac{D_{x_1,x_2}F_d}{\sigma}\,.
	\end{align}

Next, we compute the expectations occurring at the right-hand side of Proposition \ref{prop:SecondOrderPoincare} to prepare the bounds for the three terms $\gamma_1(\widetilde{F_d})$, $\gamma_2(\widetilde{F_d})$ and $\gamma_3(\widetilde{F_d})$. 

\begin{lemma}\label{lemma:gamma}
Let $x,x_1,x_2\in\RR^d$.  Then
\begin{itemize}
\item[(a)] $\EE[|D_xF_d|^3] = A_1(x)^3 + 3 A_2(x)A_1(x) + A_3(x)$,
\item[(b)] $\EE[(D_xF_d)^4] = P(x)$ with
	\begin{align}\label{eq:poly:P}
		P(x) := A_1(x)^4 + 6 A_2(x)A_1(x)^2 + 3 A_2(x)^2 + 4 A_3(x)A_1(x) + A_4(x)
	\end{align}
	and
\item[(c)] $\EE[(D_{x_1,x_2}F_d)^4] = h(x_1, x_2)^4$.
\end{itemize}
	\begin{proof}
	Since the function $h$ is assumed to be non-negative, we have that
		\begin{align}\label{eq:DxF3:1}
			\EE[|D_xF_d|^3] =\EE[(D_xF_d)^3] = \EE \sum\limits_{(y_1,y_2,y_3) \in \eta^3}h(y_1,x)h(y_2,x)h(y_3,x)
		\end{align}
		for all $x\in\RR^d$.
	Splitting the sum and using the symmetry of $h$ as well as Mecke's formula \eqref{eq:Mecke} for each summand leads to 
		\begin{align*}
			& \EE \sum\limits_{(y_1,y_2,y_3) \in \eta^3}h(y_1,x)h(y_2,x)h(y_3,x)\\
			=~ & \EE \sum\limits_{(y_1,y_2,y_3) \in \eta^3_\neq}h(y_1,x)h(y_2,x)h(y_3,x) + 3\,\EE \sum\limits_{(y_1,y_2) \in \eta^2_\neq}h(y_1,x)^2h(y_2,x) + \EE \sum\limits_{y_1 \in \eta}h(y_1,x)^3\\
			=~& \int_{\RR^d} \int_{\RR^d} \int_{\RR^d}  h(y_1,x)h(y_2,x)h(y_3,x) \,\Lambda^3(\dint(y_1,y_2,y_3))\\
							& \qquad+ 3 \int_{\RR^d} \int_{\RR^d} h(y_1,x)^2h(y_2,x)\, \Lambda(\dint(y_1,y_2))+\int_{\RR^d} h(y_1,x)^3 \,\Lambda(\dint y_1)\,.
		\end{align*}
		Now, Fubini's theorem and the definition of $A_k$ imply (a).
		
		To prove part (b) we write
		\begin{align*}
			\EE[(D_xF_d)^4] = \EE \sum\limits_{(y_1,y_2,y_3,y_4) \in \eta^4}h(y_1,x)h(y_2,x)h(y_3,x)h(y_4,x)
		\end{align*}
		and split the sum in a similar way as above. Again, using the symmetry of $h$ and several times Mecke's formula \eqref{eq:Mecke}, the result follows.
		
		Finally, assertion (c) is clear from \eqref{eq:DxFh}.
	\end{proof}
\end{lemma}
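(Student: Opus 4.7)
Part (c) is immediate from \eqref{eq:DxFh}: the quantity $D_{x_1,x_2}F_d=h(x_1,x_2)$ is deterministic (no dependence on $\eta$), so raising to the fourth power and taking expectations just gives $h(x_1,x_2)^4$.

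For parts (a) and (b) the common strategy is to expand $(D_xF_d)^k$ via $D_xF_d=\sum_{y\in\eta}h(y,x)$ from \eqref{eq:DxFh} and then decompose the resulting $k$-fold sum over $\eta^k$ according to the coincidence pattern of the summation indices; nonnegativity of $h$ lets me drop the absolute value in (a) and makes Fubini--Tonelli unconditional throughout. For (a) with $k=3$ the decomposition follows the set-partition lattice of $\{1,2,3\}$: the block-size shapes $1{+}1{+}1$, $2{+}1$ and $3$ occur with multiplicities $1,3,1$. Each piece is a sum over $\eta^3_{\ne}$, $\eta^2_{\ne}$ or $\eta$ respectively, and is converted into an integral against $\Lambda^3$, $\Lambda^2$ or $\Lambda$ by the multivariate Mecke formula \eqref{eq:Mecke}. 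Unwinding the definition of $A_k$ yields $A_1(x)^3$, $3A_2(x)A_1(x)$ and $A_3(x)$, which sum to the claimed identity.

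For (b) with $k=4$ the same recipe applies, now over the partition lattice of $\{1,2,3,4\}$: the shapes $1{+}1{+}1{+}1$, $2{+}1{+}1$, $2{+}2$, $3{+}1$ and $4$ have multiplicities $1,6,3,4,1$, and every block of size $j$ inside the sum contributes a factor $h(\cdot,x)^j$, which turns into $A_j(x)$ after applying Mecke. Reassembling the five contributions reproduces precisely the polynomial $P(x)$ of \eqref{eq:poly:P}. The only substantive aspect of the argument is the combinatorial bookkeeping in (b), namely verifying the multiplicities $(1,6,3,4,1)$; the analytic side is essentially automatic, since $h\ge 0$ licenses both Mecke and Fubini without any integrability gymnastics.
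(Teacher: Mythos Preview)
Your proposal is correct and follows essentially the same approach as the paper: both arguments drop the absolute value using $h\ge 0$, expand $(D_xF_d)^k$ as a sum over $\eta^k$, split according to coincidence patterns (set partitions of $\{1,\ldots,k\}$), and apply the multivariate Mecke formula \eqref{eq:Mecke} term by term before identifying the resulting integrals with the $A_k$'s. Your presentation is slightly more explicit about the partition-lattice bookkeeping (listing the multiplicities $1,6,3,4,1$ for $k=4$), but the substance is identical.
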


We shall now provide the announced expressions for the terms $\g_1(\widetilde{F_d}),\g_2(\widetilde{F_d})$ and $\g_3(\widetilde{F_d})$.

\begin{lemma}\label{lemma:gamma_123}
We have that
	\begin{align*}
		\gamma_1(\widetilde{F_d}) & = \frac{1}{\sigma^4} \int_{\RR^d}\int_{\RR^d} B_{1,1}(x_1,x_2) \, \big( P(x_1) \, P(x_2) \big)^{1/4}\,\L^2(\dint(x_1,x_2))\,,\\
		\gamma_2(\widetilde{F_d}) & = \frac{1}{\sigma^4} \int_{\RR^d}\int_{\RR^d} B_{2,2}(x_1,x_2) \,\L(\dint x_1)\L(\dint x_2)\,,\\
		\gamma_3(\widetilde{F_d}) & = \frac{1}{\sigma^3} \int_{\RR^d} [A_1(x)^3 + 3 A_2(x)A_1(x) + A_3(x)] \,\L(\dint x)\,.
	\end{align*}
\end{lemma}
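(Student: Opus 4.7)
The plan is to start from the definitions of $\gamma_1,\gamma_2,\gamma_3$ in Proposition \ref{prop:SecondOrderPoincare} applied to $F=\widetilde{F_d}$, then substitute the expressions for the moments of $D_x\widetilde{F_d}$ and $D_{x_1,x_2}\widetilde{F_d}$ obtained from Lemma \ref{lemma:gamma} via the scaling relations \eqref{eq:DxFhs}, and finally use Fubini's theorem to collapse one of the three integration variables into the $B_{k,\ell}$-integrals.

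First I would handle $\gamma_3(\widetilde{F_d})$, which is the shortest computation: by \eqref{eq:DxFhs} we have $|D_x\widetilde{F_d}|^3 = |D_xF_d|^3/\sigma^3$, and Lemma \ref{lemma:gamma}(a) gives $\EE[|D_xF_d|^3] = A_1(x)^3+3A_2(x)A_1(x)+A_3(x)$. Integrating against $\L$ on $\RR^d$ and pulling out $1/\sigma^3$ yields the stated formula for $\gamma_3(\widetilde{F_d})$.

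Next I would treat $\gamma_2(\widetilde{F_d})$. By Lemma \ref{lemma:gamma}(c) and \eqref{eq:DxFhs},
$\EE[(D_{x_1,x_3}\widetilde{F_d})^4]\,\EE[(D_{x_2,x_3}\widetilde{F_d})^4] = h(x_1,x_3)^4 h(x_2,x_3)^4 / \sigma^8$,
so that the integrand in $\gamma_2(\widetilde{F_d})$ equals $\sigma^{-4}\,h(x_1,x_3)^2 h(x_2,x_3)^2$. Applying Fubini to integrate over $x_3$ first and invoking the definition of $B_{2,2}$ produces the claimed identity.

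For $\gamma_1(\widetilde{F_d})$, the same substitution together with Lemma \ref{lemma:gamma}(b) gives
\[
\bigl(\EE[(D_{x_1}\widetilde{F_d})^4]\EE[(D_{x_2}\widetilde{F_d})^4]\EE[(D_{x_1,x_3}\widetilde{F_d})^4]\EE[(D_{x_2,x_3}\widetilde{F_d})^4]\bigr)^{1/4} = \sigma^{-4}\bigl(P(x_1)P(x_2)\bigr)^{1/4}h(x_1,x_3)h(x_2,x_3).
\]
Again using Fubini to first integrate $h(x_1,x_3)h(x_2,x_3)$ against $\L(\dint x_3)$, which by definition equals $B_{1,1}(x_1,x_2)$, then integrating the remaining factors against $\L^2$ yields the asserted expression. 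No step presents a genuine obstacle here: the lemma is essentially a bookkeeping exercise combining the moment computations of Lemma \ref{lemma:gamma}, the scaling \eqref{eq:DxFhs}, and Fubini; the only thing to be mildly careful about is keeping track of the $\sigma$-powers (four in $\gamma_1$ and $\gamma_2$, three in $\gamma_3$) and recognising the $B_{k,\ell}$ patterns after integrating out $x_3$.
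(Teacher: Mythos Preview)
Your proposal is correct and follows essentially the same route as the paper's own proof: apply the scaling \eqref{eq:DxFhs} to pull out the appropriate power of $\sigma$, substitute the moment formulas from Lemma \ref{lemma:gamma}, and use Fubini to integrate out $x_3$ into $B_{1,1}$ or $B_{2,2}$. The only cosmetic difference is the order in which the three terms are handled.
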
	
	\begin{proof}
Using \eqref{eq:DxFhs}, we see that $\gamma_1(\widetilde{F_d})$ coincides with
		\begin{align*}
		\frac{1}{\sigma^4} \int_{(\RR^d)^3}\big(\EE[(D_{x_1}F_d)^4]\,\EE[(D_{x_2}F_d)^4]\, \EE[(D_{x_1,x_3}F_d)^4]\,\EE[(D_{x_2,x_3}F_d)^4]\big)^{1/4}\,\L^3(\dint(x_1,x_2,x_3))\,.
		\end{align*}
Using now Lemma \ref{lemma:gamma} (b) and (c), Fubini's theorem and the definition of the parameter integral $B_{1,1}$, we conclude that
		\begin{align*}
			\gamma_1(\widetilde{F_d})&=  \frac{1}{\sigma^4} \int_{(\RR^d)^3}\big( P(x_1) P(x_2) h(x_1,x_3)^4 h(x_2, x_3)^4\big)^{1/4}\L^3(\dint(x_1,x_2,x_3))\\
			&= \frac{1}{\sigma^4} \int_{\RR^d} \int_{\RR^d} \Big(\int_{\RR^d} h(x_1,x_3) h(x_2,x_3) \L(x_3) \Big)\, \big( P(x_1) P(x_2) \big)^{1/4}\, \L(x_1) \L(x_2)\\
			&=\frac{1}{\sigma^4} \int_{\RR^d}\int_{\RR^d} B_{1,1}(x_1,x_2) \, \big( P(x_1) \, P(x_2) \big)^{1/4}\,\L^2(\dint(x_1,x_2))\,.
		\end{align*}
Next, using Lemma \ref{lemma:gamma} (c), Fubini's theorem and the definition of $B_{2,2}$ we see that
		\begin{align*}
			\gamma_2(\widetilde{F_d}) & = \frac{1}{\sigma^4}\int_{(\RR^d)^3}\big(\EE[(D_{x_1,x_3}F_d)^4]\,\EE[(D_{x_2,x_3}F_d)^4]\big)^{1/2}\,\L^3(\dint(x_1,x_2,x_3))\\
				& = \frac{1}{\sigma^4}\int_{(\RR^d)^3}\big(h(x_1,x_3)^4 h(x_2,x_3)^4\big)^{1/2}\,\L^3(\dint(x_1,x_2,x_3))\\
				& = \frac{1}{\sigma^4}\int_{\RR^d} \int_{\RR^d} \Big( \int_{\RR^d} h(x_1,x_3)^2 h(x_2,x_3)^2\,\L(\dint x_3) \Big)\,\L(\dint x_1)\L(\dint x_2)\\
				& = \frac{1}{\sigma^4}\int_{\RR^d} \int_{\RR^d} B_{2,2}(x_1,x_2) \, \L(\dint x_1)\L(\dint x_2)\,,
		\end{align*}
		as desired. Finally, according to Lemma \ref{lemma:gamma} (a) we have that
				\begin{align*}
					\gamma_3(\widetilde{F_d}) &= \frac{1}{\sigma^3} \int_{\RR^d}\EE[|D_xF_d|^3]\,\L(\dint x) = \frac{1}{\sigma^3} \int_{\RR^d} [A_1(x)^3 + 3 A_2(x)A_1(x) + A_3(x)] \,\L(\dint x)
				\end{align*}
				and the proof is complete.
	\end{proof}

Now, we can combine these expressions established so far to reformulate Proposition \ref{prop:SecondOrderPoincare} for our second-order $U$-statistics $\widetilde{F_d}$.
\begin{proposition}\label{prop:generalCLT}
Let $\eta$ be a Poisson point process on $\RR^d$ with intensity measure $\Lambda$ and let $F_d=\sum_{(x,y)\in\eta_{\neq}^2}h(x,y)$ be a second-order $U$-statistic with a non-negative symmetric kernel $h$. Put $\sigma^2:=\Var[F_d]$, $\widetilde{F_d}:=\sigma^{-1}(F_d-\EE[F_d])$ and suppose that $		\sigma^{-2}\,\EE\int_{\RR^d}(D_xF_d)^2\,\L(\dint x) < \infty$. Defining
	\begin{align*}
		\gamma_1(\widetilde{F_d}) & := \frac{1}{\sigma^4} \int_{\RR^d}\int_{\RR^d} B_{1,1}(x_1,x_2) \, \big( P(x_1) \, P(x_2) \big)^{1/4}\,\L^2(\dint(x_1,x_2))\,,\\
		\gamma_2(\widetilde{F_d}) & := \frac{1}{\sigma^4} \int_{\RR^d}\int_{\RR^d} B_{2,2}(x_1,x_2) \,\L(\dint x_1)\L(\dint x_2)\,,\\
		\gamma_3(\widetilde{F_d}) & := \frac{1}{\sigma^3} \int_{\RR^d} [A_1(x)^3 + 3 A_2(x)A_1(x) + A_3(x)] \,\L(\dint x)\,,
	\end{align*}
	one has that
	\begin{align*}
		d_W(\widetilde{F_d},Z) \leq 2\sqrt{\g_1(\widetilde{F_d})} + \sqrt{\g_2(\widetilde{F_d})}+\g_3(\widetilde{F_d})\,,
	\end{align*}
	where $Z$ is a standard Gaussian random variable.
\end{proposition}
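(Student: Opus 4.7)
The proof is essentially an assembly of the results already established in Lemma \ref{lemma:gamma_123} together with the second-order Poincar\'e inequality of Proposition \ref{prop:SecondOrderPoincare}. The plan is as follows.

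First, I would verify that the hypotheses of Proposition \ref{prop:SecondOrderPoincare} are met by the normalized functional $\widetilde{F_d}=\sigma^{-1}(F_d-\EE[F_d])$. By construction, $\EE[\widetilde{F_d}]=0$ and $\Var[\widetilde{F_d}]=1$, so only the integrability condition $\EE\int_{\RR^d}(D_x\widetilde{F_d})^2\,\L(\dint x)<\infty$ remains to be checked; but this is precisely the assumption $\sigma^{-2}\,\EE\int_{\RR^d}(D_xF_d)^2\,\L(\dint x)<\infty$ that is imposed in the statement of the proposition, after using the linearity of the difference operator in \eqref{eq:DxFhs}. Thus Proposition \ref{prop:SecondOrderPoincare} applies to $\widetilde{F_d}$ and yields
$$
d_W(\widetilde{F_d},Z) \leq 2\sqrt{\g_1(\widetilde{F_d})} + \sqrt{\g_2(\widetilde{F_d})}+\g_3(\widetilde{F_d})\,,
$$
where, at this stage, $\g_1(\widetilde{F_d}),\g_2(\widetilde{F_d}),\g_3(\widetilde{F_d})$ are the quantities defined abstractly in Proposition \ref{prop:SecondOrderPoincare} via the first- and second-order difference operators of $\widetilde{F_d}$.

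Second, I would invoke Lemma \ref{lemma:gamma_123}, which has just been proved, to identify these three abstract quantities with the concrete expressions in terms of the parameter integrals $A_k$ and $B_{k,\ell}$ appearing in the statement of the proposition. Since the formulas match verbatim, no additional computation is required, and the claimed Wasserstein bound follows immediately.

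There is no genuine obstacle here: this proposition is a packaging statement that records, in a form convenient for the later application to the edge counting statistic $\cE_d$, the combination of the general Malliavin--Stein bound of Proposition \ref{prop:SecondOrderPoincare} with the moment computations of Lemma \ref{lemma:gamma} specialized to non-negative symmetric second-order $U$-statistics. The only point at which care is needed is keeping track of the factors of $\sigma$ in \eqref{eq:DxFhs}, which produce the overall $\sigma^{-4}$ and $\sigma^{-3}$ normalizations in the definitions of $\gamma_1,\gamma_2$ and $\gamma_3$, respectively; this has already been dealt with inside the proof of Lemma \ref{lemma:gamma_123}.
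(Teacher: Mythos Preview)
Your proposal is correct and follows exactly the paper's approach: the proposition is stated as a direct combination of Proposition~\ref{prop:SecondOrderPoincare} (whose hypotheses are ensured by the normalization and the assumed integrability condition) with the identification of $\gamma_1,\gamma_2,\gamma_3$ carried out in Lemma~\ref{lemma:gamma_123}. No additional argument is needed.
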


%
%
%
%
\section{Proof of Theorem \ref{thm:CLT}}\label{sec:Proof}

Let us recall that $\eta_d$ denotes a stationary Poisson point process on $\RR^d$ with intensity $\l_d$ given by \eqref{eq:DefLambdad}. We denote by $\L$ the intensity measure of $\eta_d$, that is, $\L$ is $\l_d$ times the Lebesgue measure on $\RR^d$. Moreover, from now on we will assume without loss of generality that all the random variables $(\cE_d)_{d\geq 2}$ are defined on a common probability space $(\Omega,\cF,\PP)$ and we denote expectation (integration) with respect to $\PP$ by $\EE$.

By definition of the edge counting statistic $\cE_d$ it is clear that $\cE_d$ is a second-order $U$-statistic with symmetric and $d$-dependent kernel
\begin{align}\label{eq:funcH}
	h(x_1,x_2) := {\bf 1}\Big\{\|x-y\|\leq\d_d,\frac{x+y}{2}\in\BB^d\Big\}\,.
\end{align}
Using \eqref{eq:ExpectationUstatistic} one has that
\begin{align}\label{eq:ExpectationRepresentation}
	\EE[\cE_d] = \frac{1}{2}\int_{\RR^d}A_1(x)\,\L(\dint x)= \frac{1}{2} \kappa_d^2 \lambda_d^2 \delta_d^d
\end{align}
and using \eqref{eq:UStatisticVariance} the identity
\begin{align}\label{eq:VarianceRepresentation}
	\sigma^2 := \Var[\cE_d] = \int_{\RR^d}A_1(x)^2\,\L(\dint x)+\frac{1}{2}\int_{\RR^d}A_2(x)\,\L(\dint x)
\end{align}
follows. To derive Theorem \ref{thm:CLT} we want to apply the normal approximation bound derived in Proposition \ref{prop:generalCLT}
 and for that purpose we first need to control the parameter integrals $A_k$.
\begin{lemma}
	Let $h:\RR^d \times \RR^d \rightarrow \{0,1\}$ be the function given by \eqref{eq:funcH}.
	Then, for all $x \in \RR^d$ and all $k \in \NN$ it holds that 
	\begin{align}\label{eq:parameter_integral:bound:A}
		{\bf 1}\Big\{ x \in \BB^d_{1-\frac{\delta_d}{2}}(0) \Big\}\, \kappa_d \lambda_d \delta_d^d \leq A_k(x) \leq {\bf 1}\Big\{ x \in \BB^d_{1+\frac{\delta_d}{2}}(0) \Big\} \,\kappa_d \lambda_d \delta_d^d\,.
	\end{align}
	\begin{proof}
		The function $h$ in \eqref{eq:funcH} takes only the values $0$ and $1$. Thus, $A_k(x) = A_1(x)$ and it is sufficient to show the bounds for the special choice $k = 1$.
		The parameter integral $A_1(x)$ can be re-written as
		\begin{align*}
			A_1(x) & =  \int_{\RR^d} {\bf 1}\Big\{\|x-y\|\leq\d_d,\frac{x+y}{2}\in\BB^d\Big\}\, \Lambda(\dint y)= \int_{\RR^d} {\bf 1}\Big\{y \in \BB_{\delta_d}^d(x) \cap \BB_2^d(-x) \Big\} \,\Lambda(\dint y) = \Lambda\left( \BB_{\delta_d}^d(x) \cap \BB_2^d(-x) \right)\,.
		\end{align*}
		Since $\L$ is just a multiple of the Lebesgue measure, it is clear that
		\begin{align*}
			\Lambda\left( \BB_{\delta_d}^d(x) \cap \BB_2^d(-x) \right) = 0 \quad \text{if and only if} \qquad
			 x \not\in \BB^d_{1+\frac{\delta_d}{2}}(0)\,.
		\end{align*}
		If the distance $\| x -(-x) \|$ of the two midpoints is smaller than the absolute value of the difference of the two radii, that is, smaller than $| 2 - \delta_d |$, then the smaller ball is contained in the larger one. In other words, $\BB_{\delta_d}^d(x) \subset \BB_2^d(-x)$ if and only if $x \in \BB^d_{1-\frac{\delta_d}{2}}(0)$ and we have that
		\begin{align*}
			\Lambda\left( \BB_{\delta_d}^d(x) \cap \BB_2^d(-x) \right) \geq \Lambda\left( \BB_{\delta_d}^d(x) \right)\,,
		\end{align*}
		provided $x \in \BB^d_{1 - \frac{\delta_d}{2}}(0)$. 
		We use the smaller ball $\BB_{\delta_d}^d(x)$ to give an upper bound for $A_1(x)$:
		\begin{align*}
			\Lambda\left( \BB_{\delta_d}^d(x) \cap \BB_2^d(-x) \right) \leq {\bf 1} \Big\{ x \in \BB^d_{1+\frac{\delta_d}{2}} \Big\}\, \kappa_d \lambda_d \delta_d^d\,, \qquad x \in \RR^d\,.
		\end{align*}
		If the smaller ball is a subset of the intersection we can use its volume as a lower bound for $A_1(x)$:
		\begin{align*}
			\Lambda\left( \BB_{\delta_d}^d(x) \cap \BB_2^d(-x) \right) \geq {\bf 1} \Big\{ x \in \BB^d_{1-\frac{\delta_d}{2}} \Big\}\, \kappa_d \lambda_d \delta_d^d\,, \qquad x \in \RR^d\,,
		\end{align*}
		completing thereby the proof.
	\end{proof}
\end{lemma}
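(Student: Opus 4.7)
The decisive observation is that the kernel $h$ defined in \eqref{eq:funcH} is $\{0,1\}$-valued, so $h^k \equiv h$ for every $k\in\NN$. Consequently $A_k(x) = A_1(x)$ for all $x\in\RR^d$, and the problem reduces to proving the two-sided bound for $k=1$ alone.

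Next, I would rewrite $A_1(x)$ as the $\Lambda$-measure of an intersection of two Euclidean balls. Indeed, the conditions $\|x-y\|\leq\d_d$ and $(x+y)/2\in\BB^d$ are equivalent to $y\in\BB^d_{\d_d}(x)$ and $y\in\BB^d_2(-x)$, respectively, so
$$
A_1(x) \;=\; \Lambda\bigl(\BB^d_{\d_d}(x)\cap\BB^d_2(-x)\bigr).
$$
From here the statement becomes a purely geometric question about when and how these two balls intersect, with $\Lambda$ proportional to Lebesgue measure.

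For the upper bound, I would use the trivial domination $A_1(x)\le\Lambda(\BB^d_{\d_d}(x))=\k_d\l_d\d_d^d$ and observe that the intersection is empty unless the distance $2\|x\|$ between the centres is at most the sum of the radii $\d_d+2$. This is equivalent to $x\in\BB^d_{1+\d_d/2}(0)$, which delivers the claimed indicator factor. For the lower bound, I would check when the smaller ball is entirely contained in the larger one: by the triangle inequality this happens as soon as $2\|x\|\le 2-\d_d$, i.e. $x\in\BB^d_{1-\d_d/2}(0)$. On that region the intersection equals $\BB^d_{\d_d}(x)$, and its $\Lambda$-measure is precisely $\k_d\l_d\d_d^d$; outside the region the lower bound is the trivial $A_1(x)\ge 0$.

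I do not anticipate a genuine obstacle: the whole argument is just a triangle-inequality bookkeeping for two balls of radii $\d_d$ and $2$ with centres $x$ and $-x$. The only point that requires a little care is that the midpoint condition $(x+y)/2\in\BB^d$ translates into a \emph{radius-$2$} ball centred at $-x$ (rather than a unit ball centred at $0$), which is what produces the threshold $\d_d/2$ rather than $\d_d$ in the inner and outer radii of the good annulus.
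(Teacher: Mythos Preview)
Your proposal is correct and follows essentially the same route as the paper: reduce to $k=1$ via $h^k=h$, rewrite $A_1(x)=\Lambda\bigl(\BB^d_{\d_d}(x)\cap\BB^d_2(-x)\bigr)$, and then use the triangle-inequality conditions $2\|x\|\le \d_d+2$ and $2\|x\|\le 2-\d_d$ to obtain, respectively, the support of the upper bound and the region where the small ball sits inside the large one. The only cosmetic difference is that the paper phrases the support condition as a Lebesgue-measure-zero criterion rather than an emptiness criterion, but the resulting bounds are identical.
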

In a next step, we shall derive a lower and an upper bound for the integral of $A_1(x)^2$ and hence for the variance $\sigma^2$.
\begin{lemma}\label{lem:VarianceBound}
It holds that
	\begin{align*}
		\Big( 1 - \tfrac{\delta_d}{2} \Big)^d \kappa_d^3 \lambda_d^3 \delta_d^{2d} \leq \int_{\RR^d} A_1(x)^2 \, \Lambda(\dint x) \leq \Big( 1 + \tfrac{\delta_d}{2} \Big)^d \kappa_d^3 \lambda_d^3 \delta_d^{2d}.
	\end{align*}
	In particular,
	\begin{align*}
		\frac{1}{2} \kappa_d^2 \lambda_d^2 \delta_d^d + \Big( 1 - \tfrac{\delta_d}{2} \Big)^d \kappa_d^3 \lambda_d^3 \delta_d^{2d} \leq \sigma^2 \leq \frac{1}{2} \kappa_d^2 \lambda_d^2 \delta_d^d + \Big( 1 + \tfrac{\delta_d}{2} \Big)^d \lambda_d^3 \kappa_d^3 \delta_d^{2d}.
	\end{align*}
\end{lemma}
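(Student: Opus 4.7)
The plan is to combine the pointwise bounds on $A_1$ just proven in the previous lemma with the variance representation \eqref{eq:VarianceRepresentation}, observing that the kernel $h$ is $\{0,1\}$-valued so that $A_k = A_1$ for every $k \in \NN$.

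First I would square the pointwise bounds from \eqref{eq:parameter_integral:bound:A} (both sides are non-negative, and the indicator is idempotent), yielding
$$
{\bf 1}\Big\{ x \in \BB^d_{1-\frac{\delta_d}{2}}(0) \Big\}\, \kappa_d^2 \lambda_d^2 \delta_d^{2d} \;\leq\; A_1(x)^2 \;\leq\; {\bf 1}\Big\{ x \in \BB^d_{1+\frac{\delta_d}{2}}(0) \Big\}\, \kappa_d^2 \lambda_d^2 \delta_d^{2d}\,.
$$
Integrating against $\L = \lambda_d V_d$ and using $V_d(\BB^d_r(0)) = \kappa_d r^d$ with $r = 1 \pm \frac{\delta_d}{2}$ gives the first pair of inequalities of the lemma, namely
$$
\Big( 1 - \tfrac{\delta_d}{2} \Big)^d \kappa_d^3 \lambda_d^3 \delta_d^{2d} \;\leq\; \int_{\RR^d} A_1(x)^2 \,\L(\dint x) \;\leq\; \Big( 1 + \tfrac{\delta_d}{2} \Big)^d \kappa_d^3 \lambda_d^3 \delta_d^{2d}\,.
$$

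Next I would control the second term in the variance representation \eqref{eq:VarianceRepresentation}. Since $h$ takes values in $\{0,1\}$, one has $A_2(x) = A_1(x)$ for every $x \in \RR^d$, and therefore
$$
\frac{1}{2}\int_{\RR^d} A_2(x)\,\L(\dint x) \;=\; \frac{1}{2}\int_{\RR^d} A_1(x)\,\L(\dint x) \;=\; \frac{1}{2}\kappa_d^2 \lambda_d^2 \delta_d^d\,,
$$
where the last identity is exactly \eqref{eq:ExpectationRepresentation} (recall $\EE[\cE_d] = \tfrac{1}{2}\int A_1 \dint \L$).

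Finally, inserting both contributions into \eqref{eq:VarianceRepresentation} gives the second pair of inequalities asserted in the lemma. There is no genuine obstacle here: the non-negativity of $h$ makes squaring monotone, the fact that $h$ is an indicator collapses $A_2$ to $A_1$, and the remaining computation is just the Lebesgue volume of the balls of radii $1 \pm \delta_d/2$; so the proof is essentially a direct assembly of the previous lemma, the identity $A_k = A_1$, and \eqref{eq:ExpectationRepresentation}--\eqref{eq:VarianceRepresentation}.
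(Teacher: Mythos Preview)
Your proof is correct and follows essentially the same route as the paper: square the pointwise bounds \eqref{eq:parameter_integral:bound:A}, integrate against $\Lambda$ to obtain the first display, and then combine with \eqref{eq:ExpectationRepresentation}--\eqref{eq:VarianceRepresentation} (using $A_2=A_1$) to get the variance bounds. The only cosmetic difference is that the paper writes $\sigma^2 = \tfrac{1}{2}\kappa_d^2\lambda_d^2\delta_d^d + \int A_1^2\,\dint\Lambda$ directly, whereas you spell out the intermediate identification $A_2=A_1$ explicitly.
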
	
	\begin{proof}
		By \eqref{eq:parameter_integral:bound:A} it follows that
		\begin{align}\label{eq:A_bound}
			\int_{\RR^d} A_1(x)^2\,\L(\dint x) & \geq \int_{\RR^d} \left( {\bf 1}\Big\{ x \in \BB^d_{1-\frac{\delta_d}{2}} \Big\} \kappa_d \lambda_d \delta_d^d \right)^2 \Lambda(\dint x) = \Big(1 - \tfrac{\delta_d}{2} \Big)^d \kappa_d^3 \lambda_d^3 \delta_d^{2d},
		\end{align}
		and
		\begin{align}\label{eq:squared_A_bound}
			\int_{\RR^d} A_1(x)^2\,\L(\dint x) & \leq \int_{\RR^d} \left( {\bf 1}\Big\{ x \in \BB^d_{1+\frac{\delta_d}{2}} \Big\} \kappa_d \lambda_d \delta_d^d \right)^2 \Lambda(\dint x) = \Big(1 + \tfrac{\delta_d}{2} \Big)^d \kappa_d^3 \lambda_d^3 \delta_d^{2d}\,.
		\end{align}
		Moreover, using \eqref{eq:ExpectationRepresentation} and \eqref{eq:VarianceRepresentation} we have
		\begin{align*}
			\sigma^2 = \frac{1}{2} \kappa_d^2 \lambda_d^2 \delta_d^d + \int_{\RR^d} A_1(x)^2 \,\Lambda(\dint x)\,,
		\end{align*}
		which leads to the desired result.
	\end{proof}
\begin{remark}\label{remark:choose_delta}
	Our particular choice $\delta_d = \frac{1}{d}$ ensures that we can find absolute constants $c_1, c_2 \in (0, \infty)$ and $d_0 \in \NN$ such that $0 < c_1 \leq (1-\tfrac{\delta_d}{2})^d$ and $(1+\tfrac{\delta_d}{2})^d \leq c_2 < \infty$ for all $d \geq d_0$. The existence of such constants is important to derive the final bounds on the right hand side of our main result and implies restrictions to more general choices of $\delta_d$, see the proof of Lemma \ref{lemma:gamma_final}.
\end{remark}
In a next step, we shall check the integrability condition in Proposition \ref{prop:generalCLT}.
\begin{lemma}\label{lemma:integrability}
	It holds that
	\begin{align*}
		 \frac{1}{\sigma^2}\EE\int_{\RR^d}(D_x\cE_d)^2\,\L(\dint x) < \infty\,.
	\end{align*}
\end{lemma}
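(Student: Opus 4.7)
The plan is to reduce the integrability claim to the bounds on $A_1(x)$ and $A_2(x)$ already established in \eqref{eq:parameter_integral:bound:A}, after first expanding the second moment of the difference operator by a single application of Mecke's formula.

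First I would use the identity $D_x\cE_d = \sum_{y\in\eta}h(y,x)$ from \eqref{eq:DxFh} to write
\begin{align*}
\EE[(D_x\cE_d)^2] &= \EE\sum_{(y_1,y_2)\in\eta^2_{\neq}}h(y_1,x)h(y_2,x) + \EE\sum_{y\in\eta}h(y,x)^2\\
&= A_1(x)^2 + A_2(x),
\end{align*}
where the second step uses \eqref{eq:Mecke} together with the definition of $A_k$, and I absorb the diagonal in the $y_1=y_2$ contribution. Note that here I use that $h$ takes only the values $0$ and $1$ (so $h=h^2$), though this is incidental; in general we would get $A_1(x)^2+A_2(x)$ as above.

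Next, plugging in the upper bound from \eqref{eq:parameter_integral:bound:A}, namely $A_k(x)\leq \kappa_d\lambda_d\delta_d^d\,\mathbf{1}\{x\in\BB^d_{1+\delta_d/2}(0)\}$, I obtain
\begin{align*}
\int_{\RR^d}\EE[(D_x\cE_d)^2]\,\L(\dint x) &\leq \int_{\RR^d} \big[(\kappa_d\lambda_d\delta_d^d)^2+\kappa_d\lambda_d\delta_d^d\big]\,\mathbf{1}\big\{x\in\BB^d_{1+\delta_d/2}(0)\big\}\,\L(\dint x)\\
&= \big[(\kappa_d\lambda_d\delta_d^d)^2+\kappa_d\lambda_d\delta_d^d\big]\,\kappa_d\lambda_d\big(1+\tfrac{\delta_d}{2}\big)^d,
\end{align*}
which is finite for each $d$.

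Finally, the denominator $\sigma^2$ is strictly positive by Lemma \ref{lem:VarianceBound}, in fact bounded below by $\tfrac{1}{2}\kappa_d^2\lambda_d^2\delta_d^d>0$. Dividing the finite upper bound above by this positive quantity gives the claim. There is no real obstacle here — the argument is a routine integrability check made short by the fact that the kernel $h$ is bounded and compactly supported in each variable, so everything reduces to the pointwise bounds already proved in \eqref{eq:parameter_integral:bound:A}.
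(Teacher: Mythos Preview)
Your proof is correct and follows essentially the same approach as the paper: both expand $(D_x\cE_d)^2$ via Mecke's formula to obtain $\EE[(D_x\cE_d)^2]=A_1(x)^2+A_2(x)$, then integrate. The only difference is cosmetic: the paper recognizes $\int (A_1^2+A_2)\,\dint\L=\Var[\cE_d]+\EE[\cE_d]$ and hence writes the ratio as $1+\EE[\cE_d]/\Var[\cE_d]$, whereas you bound the integral directly using \eqref{eq:parameter_integral:bound:A}; both routes yield the finiteness immediately.
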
	
	\begin{proof}
		It follows from \eqref{eq:DxFh} and Fubini's theorem that
		\begin{align*}
				\frac{1}{\sigma^2}\EE\int_{\RR^d}(D_x\cE_d)^2\,\L(\dint x)
			 = \frac{1}{\sigma^2}\int_{\RR^d} \EE\Big[\sum\limits_{(y_1,y_2) \in \eta^2}h(y_1,x)h(y_2,x)\Big]\, \L(\dint x)\,.
		\end{align*}
		Splitting the sum, using Mecke's formula \eqref{eq:Mecke} and applying \eqref{eq:ExpectationRepresentation} and \eqref{eq:VarianceRepresentation}, we conclude that the last expression is equal to
		\begin{align*}
			& \frac{1}{\sigma^2}\int_{\RR^d} \EE\Big[\sum\limits_{(y_1,y_2) \in \eta^2_{\neq}}h(y_1,x)h(y_2,x)\Big] + \EE\Big[\sum\limits_{y \in \eta} h(y,x)^2 \Big]\, \L(\dint x)\\
			&=  \frac{1}{\sigma^2}\int_{\RR^d} \int_{\RR^d}\int_{\RR^d} h(y_1,x)h(y_2,x) \,\L(\dint y_1)\L(\dint y_2)\L(\dint x) + \frac{1}{\sigma^2} \int_{\RR^d} \int_{\RR^d} h(y,x)^2\, \L(\dint y) \L(\dint x)\\
			&= \frac{1}{\sigma^2}\int_{\RR^d} A_1(x)^2\, \L(\dint x) + \frac{1}{\sigma^2} \int_{\RR^d} A_2(x) \,\L(\dint x)\\
			&=  \frac{1}{\sigma^2}\int_{\RR^d} A_1(x)^2 \,\L(\dint x) + \frac{1}{2\sigma^2} \int_{\RR^d} A_2(x) \,\L(\dint x) + \frac{1}{2\sigma^2} \int_{\RR^d} A_2(x) \,\L(\dint x)\\
			&=  \frac{\Var[\cE_d]}{\sigma^2} + \frac{\EE[\cE_d]}{\sigma^2}\,.
		\end{align*}
		Thus, using \eqref{eq:ExpectationRepresentation} and the lower variance bound from Lemma \ref{lem:VarianceBound}, one has that
		\begin{align*}
			\frac{1}{\sigma^2}\EE\int_{\RR^d}(D_x\cE_d)^2\,\L(\dint x) & = 1 + \frac{\EE[\cE_d]}{\Var[\cE_d]}\leq 1 + \frac{1}{1 + 2 (1-\tfrac{\delta_d}{2})^d \kappa_d \lambda_d \delta_d^d}\,,
		\end{align*}
	which is finite since $2(1-\tfrac{\delta_d}{2})^d \kappa_d \lambda_d \delta_d^d \geq 0$ for all $d \in \NN$.
	\end{proof}
Now, we will use the bounds for the parameter integrals $A_k$ to derive an upper bound for the three terms appearing in Proposition \ref{prop:generalCLT}.
\begin{lemma}\label{lem:boundsGamma123Final}
We have that
	\begin{align*}
		\gamma_1(\widetilde{\cE_d}) &\leq \frac{1}{\sigma^4} \kappa_d^3 \lambda_d^3 \delta_d^{2d} \Big( 1 + \tfrac{\delta_d}{2} \Big)^d \left[ (\kappa_d \lambda_d \delta_d^d)^4 + 6(\kappa_d \lambda_d \delta_d^d)^3 + 7(\kappa_d \lambda_d \delta_d^d)^2 + \kappa_d \lambda_d \delta_d^d \right]^\frac{1}{2}\,,\\
		\gamma_2(\widetilde{\cE_d}) &\leq \frac{1}{\sigma^4} \kappa_d^3 \lambda_d^3 \delta_d^{2d} \Big( 1 + \tfrac{\delta_d}{2} \Big)^d\,,\\
		\gamma_3(\widetilde{\cE_d}) &\leq  \frac{1}{\sigma^3} \kappa_d \lambda_d  \Big( 1 + \tfrac{\delta_d}{2} \Big)^d \left[ (\kappa_d \lambda_d \delta_d^d)^3 + 3(\kappa_d \lambda_d \delta_d^d)^2 + \kappa_d \lambda_d \delta_d^d \right]\,.
	\end{align*}
	\begin{proof}
		Applying \eqref{eq:parameter_integral:bound:A} to the definition of $P(x)$ in Lemma \ref{lemma:gamma}, we see that
		\begin{align*}
			P(x) & \leq {\bf 1}\Big\{ x \in \BB^d_{1+\frac{\delta_d}{2}}(0) \Big\} \left[ (\kappa_d \lambda_d \delta_d^d)^4 + 6(\kappa_d \lambda_d \delta_d^d)^3 + 7(\kappa_d \lambda_d \delta_d^d)^2 + \kappa_d \lambda_d \delta_d^d \right]\\
				 & \leq (\kappa_d \lambda_d \delta_d^d)^4 + 6(\kappa_d \lambda_d \delta_d^d)^3 + 7(\kappa_d \lambda_d \delta_d^d)^2 + \kappa_d \lambda_d \delta_d^d \,.
		\end{align*}
		Therefore, it follows that
		\begin{align*}
			\gamma_1(\widetilde{\cE_d}) & \leq \frac{1}{\sigma^4} \int_{\RR^d} \int_{\RR^d} B_{1,1}(x_1,x_2)\left[ (\kappa_d \lambda_d \delta_d^d)^4 + 6(\kappa_d \lambda_d \delta_d^d)^3 + 7(\kappa_d \lambda_d \delta_d^d)^2 + \kappa_d \lambda_d  \delta_d^d \right]^\frac{1}{2} \Lambda(\dint x_1) \Lambda(\dint x_2)\\
				& = \frac{1}{\sigma^4} \left[ (\kappa_d \lambda_d \delta_d^d)^4 + 6(\kappa_d \lambda_d \delta_d^d)^3 + 7(\kappa_d \lambda_d \delta_d^d)^2 + \kappa_d \lambda_d \delta_d^d \right]^\frac{1}{2}\int_{\RR^d} \int_{\RR^d} B_{1,1}(x_1,x_2) \Lambda(\dint x_1) \Lambda(\dint x_2).
		\end{align*}
		We now use Fubini's theorem to re-write the double integral. Together with \eqref{eq:squared_A_bound} this implies 
		\begin{align*}
			\gamma_1(\widetilde{\cE_d}) & \leq \frac{1}{\sigma^4} \left[ (\kappa_d \lambda_d \delta_d^d)^4 + 6(\kappa_d \lambda_d \delta_d^d)^3 + 7(\kappa_d \lambda_d  \delta_d^d)^2 + \kappa_d \lambda_d \delta_d^d \right]^\frac{1}{2}\int_{\RR^d} A_1(y)^2 \Lambda(\dint y)\\
				& \leq \frac{1}{\sigma^4} \kappa_d^3 \lambda_d^3 \delta_d^{2d} \Big( 1 + \tfrac{\delta_d}{2} \Big)^d \left[ (\kappa_d \lambda_d \delta_d^d)^4 + 6(\kappa_d \lambda_d \delta_d^d)^3 + 7(\kappa_d \lambda_d \delta_d^d)^2 + \kappa_d \lambda_d \delta_d^d \right]^\frac{1}{2}\,.
		\end{align*}

Using \eqref{eq:squared_A_bound} we obtain in a similar way that
		\begin{align*}
			\gamma_2(\widetilde{\cE_d}) = \frac{1}{\sigma^4} \int_{\RR^d}\int_{\RR^d} B_{2,2}(x_1,x_2) \,\L(\dint x_1)\L(\dint x_2) \leq \frac{1}{\sigma^4} \int_{\RR^d} A_1(y)^2 \,\L(\dint y) \leq \frac{1}{\sigma^4} \Big(1 + \tfrac{\delta_d^d}{2} \Big)^d \kappa_d^3 \lambda_d^3 \delta_d^{2d}\,.
		\end{align*}		
		
Finally, using \eqref{eq:parameter_integral:bound:A} it follows that 
		\begin{align*}
			\gamma_3(\widetilde{\cE_d}) & \leq \frac{1}{\sigma^3} \int_{\RR^d} {\bf 1}\Big\{ x \in \BB^d_{1+\frac{\delta_d}{2}}(0) \Big\} \left[ (\kappa_d \lambda_d \delta_d^d)^3 + 3(\kappa_d \lambda_d \delta_d^d)^2 + \kappa_d \lambda_d \delta_d^d \right] \Lambda(\dint x)\\
										& = \frac{1}{\sigma^3} \kappa_d \lambda_d \Big( 1 + \tfrac{\delta_d}{2} \Big)^d \left[ (\kappa_d \lambda_d \delta_d^d)^3 + 3(\kappa_d \lambda_d \delta_d^d)^2 + \kappa_d \lambda_d \delta_d^d \right]
		\end{align*}		
		and the proof is complete.
	\end{proof}
\end{lemma}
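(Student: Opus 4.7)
The plan is to start from the exact expressions for $\gamma_1(\widetilde{\cE_d}), \gamma_2(\widetilde{\cE_d}), \gamma_3(\widetilde{\cE_d})$ obtained in Lemma \ref{lemma:gamma_123}, and then substitute the pointwise bounds \eqref{eq:parameter_integral:bound:A} on $A_k(x)$ and the integrated bound \eqref{eq:squared_A_bound}. The crucial simplification is that the kernel $h$ in \eqref{eq:funcH} is $\{0,1\}$-valued, so $h^k = h$ for every $k \in \NN$. In particular $A_k(x) = A_1(x)$ for all $k$, and also $B_{2,2}(x_1,x_2) = \int h(x_1,y)h(x_2,y)\,\L(\dint y) = B_{1,1}(x_1,x_2)$.

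For $\gamma_3(\widetilde{\cE_d})$ the calculation is direct: substitute the upper bound $A_k(x) \leq \mathbf{1}\{x \in \BB^d_{1+\delta_d/2}(0)\}\kappa_d\lambda_d\delta_d^d$ into the formula of Lemma \ref{lemma:gamma_123} for $\gamma_3$, factor the constants $(\kappa_d\lambda_d\delta_d^d)^3 + 3(\kappa_d\lambda_d\delta_d^d)^2 + \kappa_d\lambda_d\delta_d^d$ out of the integral, and use $\int_{\RR^d}\mathbf{1}\{x \in \BB^d_{1+\delta_d/2}(0)\}\,\L(\dint x) = \kappa_d\lambda_d(1+\tfrac{\delta_d}{2})^d$. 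For $P(x)$ defined in \eqref{eq:poly:P}, plug the upper bound on each $A_k(x)$ termwise; pulling out the indicator (which squares to itself) and collecting the coefficients $1 + 6 + (3 + 4) + 1$ on powers of $\kappa_d\lambda_d\delta_d^d$ gives
\[
P(x) \leq \mathbf{1}\{x \in \BB^d_{1+\delta_d/2}(0)\}\bigl[(\kappa_d\lambda_d\delta_d^d)^4 + 6(\kappa_d\lambda_d\delta_d^d)^3 + 7(\kappa_d\lambda_d\delta_d^d)^2 + \kappa_d\lambda_d\delta_d^d\bigr].
\]

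For $\gamma_1(\widetilde{\cE_d})$, bound the indicator by $1$ so that $(P(x_1)P(x_2))^{1/4}$ is bounded by a deterministic constant, which then factors out of the double integral in Lemma \ref{lemma:gamma_123}. The remaining double integral $\int_{\RR^d}\int_{\RR^d} B_{1,1}(x_1,x_2)\,\L^2(\dint(x_1,x_2))$ is reduced by Fubini's theorem to $\int_{\RR^d} A_1(y)^2\,\L(\dint y)$, which is bounded by $(1+\tfrac{\delta_d}{2})^d \kappa_d^3\lambda_d^3\delta_d^{2d}$ thanks to \eqref{eq:squared_A_bound}. For $\gamma_2(\widetilde{\cE_d})$, since $B_{2,2} = B_{1,1}$ by the $\{0,1\}$-valued property of $h$, exactly the same Fubini identification and \eqref{eq:squared_A_bound} give the stated bound immediately.

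The main obstacle is bookkeeping rather than substance: one has to track the precise polynomial coefficients appearing in $P(x)$, and ensure that the Fubini identification of the $B_{k,\ell}$ double integrals with $\int A_1(y)^2\,\L(\dint y)$ is used consistently across the three cases so that the factor $(1+\tfrac{\delta_d}{2})^d \kappa_d^3\lambda_d^3\delta_d^{2d}$ appears in the $\gamma_1$ and $\gamma_2$ bounds, whereas in $\gamma_3$ only the single factor $\kappa_d\lambda_d(1+\tfrac{\delta_d}{2})^d$ arises from integrating the indicator once.
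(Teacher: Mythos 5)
Your proposal is correct and follows essentially the same route as the paper's proof: the termwise substitution of the bound \eqref{eq:parameter_integral:bound:A} into $P(x)$ with the coefficients $1+6+(3+4)+1$, the Fubini reduction of the $B_{k,\ell}$ double integrals to $\int_{\RR^d} A_1(y)^2\,\L(\dint y)$ combined with \eqref{eq:squared_A_bound}, and the single integration of the indicator for $\gamma_3(\widetilde{\cE_d})$ all match the paper's argument. Your explicit observation that $h^k=h$ forces $B_{2,2}=B_{1,1}$ merely makes transparent what the paper treats implicitly in its $\gamma_2(\widetilde{\cE_d})$ step, so there is nothing substantive to add.
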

To derive the asymptotic behaviour of the three bounds in the previous lemma, we will distinguish the following regimes:
\begin{align}\label{eq:cases:1}
	\lim_{d\to\infty}\kappa_d^2\l_d^2\d_d^{2d} & = \infty\,,\\
	\label{eq:cases:2}
	\lim_{d\to\infty}\kappa_d^2\l_d^2\d_d^{2d} & = c^2\in(0,\infty)\,,\\
	\label{eq:cases:3}
	\lim_{d\to\infty}\kappa_d^2\l_d^2\d_d^{2d} & = 0\,.
\end{align}
In the next lemma we shall provide upper bounds for $\gamma_1(\widetilde{\cE_d})$, $\gamma_2(\widetilde{\cE_d})$ and $\gamma_3(\widetilde{\cE_d})$ as well as lower bounds for the variance $\sigma^2$ for each of these regimes.
\begin{lemma}\label{lemma:gamma_final}
	In the first regime \eqref{eq:cases:1} there exists absolute constants $c_1, c_2, c_3 \in (0,\infty)$ and $d_1 \in \NN$ such that
	\begin{alignat*}{2}
		\sigma^2 & \geq (1-\tfrac{\delta_d}{2})^d \kappa_d^3 \lambda_d^3 \delta_d^{2d}\,,\qquad\qquad\qquad\qquad
		&&\gamma_1(\widetilde{\cE_d}) \leq c_1(\kappa_d \lambda_d)^{-1}\,,\\
		\gamma_2(\widetilde{\cE_d}) & \leq c_2(\kappa_d \lambda_d)^{-1}(\kappa_d \lambda_d \delta_d^d)^{-2}\,,\qquad
		&&\gamma_3(\widetilde{\cE_d}) \leq c_3(\kappa_d \lambda_d)^{-\frac{1}{2}}\,.
	\end{alignat*}
	for all $d \geq d_1$.
	In the second regime \eqref{eq:cases:2}, we have absolute constants $c_4, c_5, c_6 \in (0,\infty)$ and $d_2 \in \NN$ such that
	\begin{alignat*}{2}
		\sigma^2 & \geq \frac{1}{2} \kappa_d^2 \lambda_d^2 \delta_d^d\,,\qquad\qquad\qquad\qquad\qquad
		&&\gamma_1(\widetilde{\cE_d}) \leq c_4 (\kappa_d \lambda_d)^{-1}\,,\\
		\gamma_2(\widetilde{\cE_d}) & \leq c_5 (\kappa_d \lambda_d)^{-1}\,,
		&&\gamma_3(\widetilde{\cE_d}) \leq c_6 (\kappa_d \lambda_d)^{-\frac{1}{2}}\,,
	\end{alignat*}
	holds for all $d\geq d_2$, while in the third regime \eqref{eq:cases:3} we can find absolute constants $c_7, c_8, c_9 \in (0,\infty)$ and $d_3 \in \NN$ such that
	\begin{alignat*}{2}
		\sigma^2 & \geq \frac{1}{2} \kappa_d^2 \lambda_d^2 \delta_d^d\,,\qquad\qquad\qquad\qquad\qquad
		&&\gamma_1(\widetilde{\cE_d}) \leq c_7 (\kappa_d \lambda_d)^{-\frac{1}{2}}\delta_d^{\frac{d}{2}}\,,\\
		\gamma_2(\widetilde{\cE_d}) & \leq c_8 (\kappa_d \lambda_d)^{-1}\,,
		&&\gamma_3(\widetilde{\cE_d}) \leq c_9 (\kappa_d \lambda_d)^{-\frac{1}{2}}(\kappa_d \lambda_d \delta_d^d)^{-\frac{1}{2}}
	\end{alignat*}
	if $d\geq d_3$.
	\begin{proof}
		Using Lemma \ref{lem:VarianceBound} one can directly obtain the variance estimates by omitting the first or second term in the sum, respectively.
		Together with Lemma \ref{lem:boundsGamma123Final} it follows in the first regime \eqref{eq:cases:1} that
		\begin{align*}
			\gamma_1(\widetilde{\cE_d}) & \leq \big( (1-\tfrac{\delta_d}{2})^d \kappa_d^3 \lambda_d^3 \delta_d^{2d} \big)^{-2} \kappa_d^3 \lambda_d^3 \delta_d^{2d} (1+\tfrac{\delta_d}{2})^d \,\big[ (\kappa_d\lambda_d\delta_d^d)^4 + 6(\kappa_d\lambda_d\delta_d^d)^3 + 7(\kappa_d\lambda_d\delta_d^d)^2 + (\kappa_d\lambda_d\delta_d^d)^1 \big]^{\tfrac{1}{2}}\,.
		\end{align*}
		As explained in Remark \ref{remark:choose_delta} we can find absolute constants $\tilde{c}_1, \tilde{c}_2 \in (0,\infty)$ and $\tilde{d}_0 \in \NN$ such that
		\begin{align*}
			\gamma_1(\widetilde{\cE_d}) & \leq \tilde{c}_1^{-2} \tilde{c}_2  \kappa_d^{-3} \lambda_d^{-3} \delta_d^{-2d} \big[ (\kappa_d\lambda_d\delta_d^d)^4 + 6(\kappa_d\lambda_d\delta_d^d)^3 + 7(\kappa_d\lambda_d\delta_d^d)^2 + (\kappa_d\lambda_d\delta_d^d)^1 \big]^{\tfrac{1}{2}}
		\end{align*}
		holds for all $d \geq \tilde{d}_0$.
		Since $\lim\limits_{d\to\infty}\kappa_d\l_d\d_d^d = \infty$ we can find further absolute constants $\tilde{c}_3 \in (0, \infty)$ and $\tilde{d}_1 \in \NN$ such that
		\begin{align*}
			\gamma_1(\widetilde{\cE_d}) & \leq \tilde{c}_1^{-2} \tilde{c}_2 \tilde{c}_3 \kappa_d^{-3} \lambda_d^{-3} \delta_d^{-2d} (\kappa_d\lambda_d\delta_d^d)^{\tfrac{4}{2}},
		\end{align*}
		holds for all $d \geq \max(\tilde{d}_0, \tilde{d}_1)$. This directly leads to the desired bound for $\gamma_1(\widetilde{\cE_d})$.
		Using Lemma \ref{lem:boundsGamma123Final} and Remark \ref{remark:choose_delta} we further find that the inequality
		\begin{align*}
			\gamma_2(\widetilde{\cE_d}) & \leq \big( (1-\tfrac{\delta_d}{2})^d \kappa_d^3 \lambda_d^3 \delta_d^{2d} \big)^{-2} \kappa_d^3 \lambda_d^3 \delta_d^{2d} (1+\tfrac{\delta_d}{2})^d
										\leq \tilde{c}_1^{-2} \tilde{c}_2 \kappa_d^{-3} \lambda_d^{-3} \delta_d^{-2d}
										= \tilde{c}_1^{-2} \tilde{c}_2 (\kappa_d \lambda_d)^{-1} (\kappa_d \lambda_d \delta_d^d)^{-2}
		\end{align*}
		holds for all sufficiently large space dimensions $d$. Finally, for $\gamma_3(\widetilde{\cE_d})$ we get
		\begin{align*}
			\gamma_3(\widetilde{\cE_d}) & \leq \big( (1-\tfrac{\delta_d}{2})^d \kappa_d^3 \lambda_d^3 \delta_d^{2d} \big)^{-\frac{3}{2}} \kappa_d \lambda_d (1+\tfrac{\delta_d}{2})^d \left[ (\kappa_d \lambda_d \delta_d^d)^3 + 3(\kappa_d \lambda_d \delta_d^d)^2 + \kappa_d \lambda_d \delta_d^d \right]\,.
		\end{align*}
		Again, Remark \ref{remark:choose_delta} implies that there are absolute constants $\tilde{c}_1, \tilde{c}_2 \in (0,\infty)$ and $\tilde{d}_0 \in \NN$ such that
		\begin{align*}
			\gamma_3(\widetilde{\cE_d}) & \leq \tilde{c}_1^{-\frac{3}{2}}\tilde{c}_2 \kappa_d^{-\frac{9}{2}} \lambda_d^{-\frac{9}{2}} \delta_d^{-3d} \kappa_d \lambda_d \left[ (\kappa_d \lambda_d \delta_d^d)^3 + 3(\kappa_d \lambda_d \delta_d^d)^2 + \kappa_d \lambda_d \delta_d^d \right]
		\end{align*}
		holds for all $d \geq \tilde{d}_0$.
		Since $\lim\limits_{d\to\infty}\kappa_d\l_d\d_d^d = \infty$ we can find further absolute constants $\tilde{c}_4 \in (0, \infty)$ and $\tilde{d}_2 \in \NN$ such that
		\begin{align*}
			\gamma_3(\widetilde{\cE_d}) & \leq \tilde{c}_1^{-\frac{3}{2}}\tilde{c}_2\tilde{c}_4 \kappa_d^{-\frac{9}{2}} \lambda_d^{-\frac{9}{2}} \delta_d^{-3d} \kappa_d \lambda_d (\kappa_d \lambda_d \delta_d^d)^3
		\end{align*}
		is valid, provided that $d \geq \max(\tilde{d}_0, \tilde{d}_2)$. This proves the bound for $\gamma_3(\widetilde{\cE_d})$ in regime \eqref{eq:cases:1}. Since the estimates in regimes \eqref{eq:cases:2} and \eqref{eq:cases:3} follow in a similar way, we omit the details.
	\end{proof}
\end{lemma}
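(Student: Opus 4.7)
The plan is to use Lemma \ref{lem:boundsGamma123Final} as the starting point and combine it with the variance lower bounds from Lemma \ref{lem:VarianceBound}, treating each of the three regimes \eqref{eq:cases:1}--\eqref{eq:cases:3} separately. Throughout, I would introduce the shorthand $\alpha_d := \kappa_d \lambda_d \delta_d^d$; by Remark \ref{remark:choose_delta}, both $(1-\delta_d/2)^d$ and $(1+\delta_d/2)^d$ are sandwiched between two positive absolute constants for all sufficiently large $d$, so these factors may be absorbed into absolute constants at the end. The whole argument then reduces to polynomial bookkeeping in the two scalars $\kappa_d\lambda_d$ and $\alpha_d$.

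For the variance, I would simply retain whichever of the two non-negative summands in Lemma \ref{lem:VarianceBound} is asymptotically larger. In regime \eqref{eq:cases:1} the cubic term $(1-\delta_d/2)^d\kappa_d^3\lambda_d^3\delta_d^{2d}$ dominates the quadratic one (since this is exactly what $\alpha_d\to\infty$ says), so I keep only that term to get the first stated bound. In regimes \eqref{eq:cases:2} and \eqref{eq:cases:3} the quadratic summand $\tfrac12\kappa_d^2\lambda_d^2\delta_d^d$ dominates (or is at least of the same order as) the cubic one, and dropping the cubic part yields $\sigma^2\geq \tfrac12\kappa_d^2\lambda_d^2\delta_d^d$.

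For the three terms $\gamma_i(\widetilde{\cE_d})$ I would plug the appropriate variance lower bound into the inequalities of Lemma \ref{lem:boundsGamma123Final}. The bound for $\gamma_2$ is immediate, because no polynomial in $\alpha_d$ appears: one just computes $\sigma^{-4}\cdot\kappa_d^3\lambda_d^3\delta_d^{2d}$ with the appropriate variance bound and simplifies. For $\gamma_1$ and $\gamma_3$ one must identify the dominant term in the polynomials $\alpha_d^4+6\alpha_d^3+7\alpha_d^2+\alpha_d$ and $\alpha_d^3+3\alpha_d^2+\alpha_d$: the top-degree terms dominate in regime \eqref{eq:cases:1}, every term is of constant order in regime \eqref{eq:cases:2}, and the linear term $\alpha_d$ dominates in regime \eqref{eq:cases:3}. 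Substituting and collecting powers of $\kappa_d\lambda_d$ and $\alpha_d$ reproduces the stated bounds --- for instance, in regime \eqref{eq:cases:3} the factor $\alpha_d^{1/2}=(\kappa_d\lambda_d)^{1/2}\delta_d^{d/2}$ emerging from the polynomial in $\gamma_1$ produces the extra $\delta_d^{d/2}$, and the corresponding $\alpha_d$-factor in $\gamma_3$ gives the additional $(\kappa_d\lambda_d\delta_d^d)^{-1/2}$.

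The main obstacle is the bookkeeping: one must be consistent about which polynomial term is dominant in which regime, and verify that the final bound actually tends to zero under the standing assumption \eqref{eq:DefLambdad} together with the hypothesis $\kappa_d\lambda_d\to\infty$ implicit in Theorem \ref{thm:CLT}. Using the wrong leading term (for example, bounding the polynomial by $\alpha_d^4$ in regime \eqref{eq:cases:3}, where $\alpha_d\to 0$) would produce estimates that blow up rather than vanish. Once the three case distinctions are disentangled, each individual bound follows from a one-line algebraic simplification of the ratio between the $\kappa_d^3\lambda_d^3\delta_d^{2d}$ (or $\kappa_d\lambda_d$) prefactor and the appropriate power of the variance lower bound.
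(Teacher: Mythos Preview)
Your proposal is correct and follows essentially the same route as the paper's proof: drop one summand in the variance bound of Lemma~\ref{lem:VarianceBound}, insert the resulting lower bound into the estimates of Lemma~\ref{lem:boundsGamma123Final}, absorb the factors $(1\pm\delta_d/2)^d$ via Remark~\ref{remark:choose_delta}, and then identify the dominant monomial in the polynomials in $\alpha_d=\kappa_d\lambda_d\delta_d^d$ according to whether $\alpha_d\to\infty$, $\alpha_d\to c$, or $\alpha_d\to 0$. The paper carries this out explicitly only for regime~\eqref{eq:cases:1} and defers the other two to the reader, whereas you outline all three; your shorthand $\alpha_d$ is a cosmetic improvement, but the argument is identical.
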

After these preparations, we can now present the proof of our main result.

\begin{proof}[Proof of Theorem \ref{thm:CLT}]
We use Proposition \ref{prop:generalCLT} and apply the results of the last lemma. In the first regime \eqref{eq:cases:1}, we get absolute constants $c_1, c_2, c_3 \in (0, \infty)$ and $d_1 \in \NN$ such that
\begin{align*}
	d_W(\widetilde{\cE_d},Z) & \leq 2 \sqrt{c_1} \sqrt{(\kappa_d \lambda_d)^{-1}} + \sqrt{c_2} \sqrt{(\kappa_d \lambda_d)^{-1}(\kappa_d \lambda_d \delta_d^d)^{-2}}+ c_3 (\kappa_d \lambda_d)^{-\frac{1}{2}}\\
		& \leq 2 \sqrt{c_1} (\kappa_d \lambda_d)^{-\frac{1}{2}} + \sqrt{c_2} (\kappa_d \lambda_d)^{-\frac{1}{2}}(\kappa_d \lambda_d \delta_d^d)^{-1} + c_3 (\kappa_d \lambda_d)^{-\frac{1}{2}}\\
		& \leq (2 \sqrt{c_1} + c_3) (\kappa_d \lambda_d)^{-\frac{1}{2}} + \sqrt{c_2} (\kappa_d \lambda_d)^{-\frac{1}{2}}(\kappa_d \lambda_d \delta_d^d)^{-1}
\end{align*}
holds for all $d > d_1$. Since in the first regime, $\kappa_d\l_d\d_d^d \rightarrow \infty$, which implies that $(\kappa_d \lambda_d \delta_d^d)^{-1} \rightarrow 0$, we see that $(\kappa_d \lambda_d)^{-\frac{1}{2}}$ is the asymptotically leading term. It thus follows that there exist absolute constants $\bar{c}_1 \in (0, \infty)$ and $\bar{d}_1 \in \NN$ such that
\begin{align*}
	d_W(\widetilde{\cE_d},Z) & \leq \bar{c}_1 (\kappa_d \lambda_d)^{-\frac{1}{2}}
\end{align*}
for all $d > \bar{d}_1$.

In the second regime \eqref{eq:cases:2}, we get absolute constants $c_4, c_5, c_6 \in (0, \infty)$ and $d_2 \in \NN$ such that 
\begin{align*}
	d_W(\widetilde{\cE_d},Z) & \leq 2 \sqrt{c_4} \sqrt{(\kappa_d \lambda_d)^{-1}} + \sqrt{c_5} \sqrt{(\kappa_d \lambda_d)^{-1}} + c_6 (\kappa_d \lambda_d)^{-\frac{1}{2}}\\
		& \leq 2 \sqrt{c_4} (\kappa_d \lambda_d)^{-\frac{1}{2}} + \sqrt{c_5} (\kappa_d \lambda_d)^{-\frac{1}{2}} + c_6 (\kappa_d \lambda_d)^{-\frac{1}{2}}\\
		& \leq (2 \sqrt{c_4} + \sqrt{c_5} + c_6) (\kappa_d \lambda_d)^{-\frac{1}{2}}
\end{align*}
for all $d > d_2$, which directly leads to absolute constants $\bar{c}_2 := (2 \sqrt{c_4} + \sqrt{c_5} + c_6)$ and $\bar{d}_2 := d_2$ such that
\begin{align*}
	d_W(\widetilde{\cE_d},Z) & \leq \bar{c}_2 (\kappa_d \lambda_d)^{-\frac{1}{2}},
\end{align*}
for all $d > \bar{d}_2$.

Finally, in the third regime \eqref{eq:cases:3}, we get absolute constants $c_7, c_8, c_9 \in (0, \infty)$ and $d_3 \in \NN$ such that
\begin{align*}
	d_W(\widetilde{\cE_d},Z) & \leq 2 \sqrt{c_7} \sqrt{(\kappa_d \lambda_d)^{-\frac{1}{2}}\delta_d^{\frac{d}{2}}} + \sqrt{c_8} \sqrt{(\kappa_d \lambda_d)^{-1}} + c_9 (\kappa_d \lambda_d)^{-\frac{1}{2}}(\kappa_d \lambda_d \delta_d^d)^{-\frac{1}{2}}\\
		& \leq 2 \sqrt{c_7} (\kappa_d \lambda_d)^{-\frac{1}{4}}\delta_d^{\frac{d}{4}} + \sqrt{c_8} (\kappa_d \lambda_d)^{-\frac{1}{2}} + c_9 ( \kappa_d \lambda_d)^{-\frac{1}{2}}(\kappa_d \lambda_d \delta_d^d)^{-\frac{1}{2}}
\end{align*}
for all $d > d_3$. Since $(\kappa_d \lambda_d)^{-\frac{1}{4}}\delta_d^{\frac{d}{4}} = (\kappa_d \lambda_d)^{-\frac{1}{2}} (\kappa_d \lambda_d \delta_d^d)^\frac{1}{4}$, we see by \eqref{eq:cases:3} that the first term in the sum tends to zero faster than the second term. Further, it follows from the fact that $(\kappa_d \lambda_d \delta_d^d)^{-\frac{1}{2}} \rightarrow \infty$ that the third term in the sum tends to zero slower than the second. This allows us to find absolute constants $\bar{c}_3 \in (0, \infty)$ and $\bar{d}_3 \in \NN$ such that
\begin{align*}
	d_W(\widetilde{\cE_d},Z) & \leq \bar{c}_3 (\kappa_d \lambda_d)^{-\frac{1}{2}}(\kappa_d \lambda_d \delta_d^d)^{-\frac{1}{2}}
\end{align*}
holds for all $d > \bar{d}_3$.

Combining these three cases leads to absolute constants $c \in (0, \infty)$ and $d_0 \in \NN$ such that
\begin{align*}
	d_W(\widetilde{\cE_d},Z) \leq c (\kappa_d \lambda_d)^{-\frac{1}{2}} \max\Big\{1, (\kappa_d\lambda_d \delta_d^d)^{-\frac{1}{2}} \Big\}
\end{align*}
holds for all $d > d_0$. Using our assumption \eqref{eq:DefLambdad} it follows that $d_W(\widetilde{\cE_d},Z) \rightarrow 0$ and hence $\widetilde{\cE_d} \overset{D}{\longrightarrow} Z$, as $d\to\infty$. This completes the proof of Theorem \ref{thm:CLT}.
\end{proof}



\begin{thebibliography}{30}\small
\addcontentsline{toc}{section}{References}

\bibitem{BubeckDingEldanRacz}
S. Bubeck, J. Ding, R. Eldan and M.R\'acz: Testing for high-dimensional geometry in random graphs. To appear in Random Structures Algorithms (2016).

\bibitem{DevroyeGyorgyLugosiUdina}
L. Devroye, A. Gy\"orgy, G. Lugosi and F. Udina: High-dimensional random geometric graphs and their clique number. Electron. J. Probab. \textbf{16}, article 90 (2011).

\bibitem{LastPeccatiSchulte}
G. Last, G. Peccati and M. Schulte: Normal approximations on Poisson spaces: Mehler's formula, second order Poincar\'e inequalities and stabilization. Probab. Theory Related Fields \textbf{165}, 667--723 (2016).


\bibitem{LastPenrose}
G. Last and M. Penrose: Lectures on the Poisson Process. Forthcoming IMS Textbook, Cambridge University Press (2016+).


\bibitem{Olver}
F.W.J. Olver, D.W. Lozier, R.F. Boisvert and C.W. Clark: NIST Handbook of Mathematical Functions. Cambridge University Press (2010).

\bibitem{PeccatiReitzner}
G. Peccati and M. Reitzner (editors): Stochastic Analysis for Poisson Point Processes. Malliavin Calculus, Wiener-Ito Chaos Expansions and Stochastic Geometry. Bocconi \& Springer (2016).

\bibitem{PenroseBook}
M.D. Penrose: Random Geometric Graphs. Oxford University Press (2003).

\bibitem{ReitznerSchulte}
M. Reitzner and M. Schulte: Central limit theorems for $U$-statistics of Poisson point processes. Ann. Probab. \textbf{41}, 3879--3909 (2013).

\bibitem{ReitznerSchulteThaele}
M. Reitzner, M. Schulte and C. Th\"ale: Limit theory for the Gilbert graph. arXiv: 1312.4861 (2016).



\end{thebibliography}
\end{document}